\numberwithin{equation}{section}
\newtheorem{theorem}{Theorem}
\newtheorem{thm}[theorem]{Theorem}
\newtheorem{cor}[theorem]{Corollary}
\newtheorem{lemma}[theorem]{Lemma}
\newtheorem{prop}[theorem]{Proposition}
\theoremstyle{definition}
\newtheorem{problem}[theorem]{Problem}
\theoremstyle{remark}
\newtheorem{remark}[theorem]{Remark}
\numberwithin{theorem}{section}
\newcommand{\eps}{\varepsilon}
\newcommand{\N}{\mathbb{N}}
\begin{document}

\title[Coarse and uniform embeddings.]{Coarse and uniform embeddings.}
\subjclass[2010]{Primary: 46B80 } 
 \keywords{nonlinear geometry of Banach spaces, coarse embedding, uniform embedding}
\author{B. M. Braga }
\address{Department of Mathematics, Statistics, and Computer Science (M/C 249)\\
University of Illinois at Chicago\\
851 S. Morgan St.\\
Chicago, IL 60607-7045\\
USA}\email{demendoncabraga@gmail.com}
\date{}
\maketitle

\begin{abstract}
In these notes, we study the relation between uniform and coarse embeddings between Banach spaces as well as uniform and coarse equivalences. In order to understand these relations better, we look at the problem of when a coarse embedding can be assumed to be also topological. Among other results, we show that if a Banach space $X$ uniformly embeds into a minimal Banach space $Y$, then $X$ simultaneously coarsely and uniformly embeds into $Y$, and if a Banach space $X$ coarsely embeds into a minimal Banach space $Y$, then $X$ simultaneously coarsely and homeomorphically embeds into $Y$ by a map with uniformly continuous inverse. 
\end{abstract}

\section{Introduction.}
	
The study of Banach spaces as metric spaces has recently increased significantly, and much has been done regarding the uniform and coarse theory of Banach spaces in the past two decades. Let $(M,d)$ and $(N,\partial)$ be metric spaces, and consider a map $f:(M,d)\to (N,\partial)$. For each $t\geq 0$, we define the \emph{expansion modulus} of $f$ as 

$$\omega_f(t)=\sup\{\partial(f(x),f(y))\mid d(x,y)\leq t\},$$\hfill

\noindent  and the \emph{compression modulus} of $f$ as

$$\rho_f(t)=\inf\{\partial(f(x),f(y))\mid  d(x,y)\geq t\}.$$\hfill

\noindent Hence, $\rho_f(d(x,y))\leq \partial(f(x),f(y))\leq \omega_f(d(x,y))$, for all $x,y\in M$. The map  $f$ is a uniform embedding if $\lim_{t\to 0_+}\omega_f(t)=0$ and $\rho_f(t)>0$, for all $t>0$. We call $f$ a \emph{uniform homeomorphism} if $f$ is a surjective uniform embedding. The map $f$ is called \emph{coarse} if $\omega_f(t)<\infty$, for all $t\geq 0$, and \emph{expanding} if $\lim_{t\to\infty}\rho_f(t)=\infty$. If $f$ is both expanding and coarse, $f$ is called a \emph{coarse embedding}. A coarse embedding $f$ which is also \emph{cobounded}, i.e., $\sup_{y\in N} \partial(y, f(M))<\infty$, is called a \emph{coarse equivalence} (for more details on those concepts see Subsection \ref{coarsedef}).

 Although the uniform theory of Banach spaces has already been extensively studied in the past, only recently we have been starting to understand the coarse theory better. For example, it had been known since 1984 that there are uniformly homeomorphic separable Banach spaces which are not linearly isomorphic (see \cite{R}, Theorem 1), but it was not till 2012 that N. Kalton was able to show that there are coarsely equivalent separable Banach spaces (i.e., with Lipschitz isomorphic nets) which are not uniformly homeomorphic (see \cite{Ka}, Theorem 8.9). However, it is still not known whether the concepts of uniform and coarse embeddability are equivalent. Precisely, the following problem remains open.

\begin{problem}\label{problemaprin}
Let $X$ and $Y$ be Banach spaces. Does $X$ uniformly embed into $Y$ if and only if $X$ coarsely embed into $Y$? 
\end{problem}

In \cite{Ra}, N. Randrianarivony had shown that a Banach space coarsely embeds into a Hilbert space  if and only if  it uniformly embeds into a Hilbert space. In \cite{Ka4}, Kalton showed that the same also holds for embeddings into $\ell_\infty$ (Theorem 5.3). C. Rosendal had made some improvements on the problem above by showing that if $X$ uniformly embeds into $Y$, then $X$ simultaneously uniformly and coarsely embeds into $\ell_p(Y)$, for any $p\in[1,\infty)$ (see \cite{Ro}, Theorem 2). In particular, if $X$ uniformly embeds into $\ell_p$, then $X$ simultaneously  coarsely and uniformly embeds into $\ell_p$. On the other hand, A. Naor had recently proven that there exist separable Banach spaces $X$ and $Y$, and a Lipschitz map $f$ from a net $N\subset X$ into $Y$ such that

$$\sup_{x\in N}\|F(x)-f(x)\|=\infty,$$\hfill

\noindent for all uniformly continuous maps $F:X\to Y$ (see \cite{N}, Remark 2). Such result suggests that it may not be true (or at least not easy to show) that $X$ uniformly embeds into $Y$, given that $X$ coarsely embeds into $Y$. 

In these notes, we study the relation between coarse embeddings (resp. coarse equivalences) and uniform embeddings (resp. uniform homeomorphisms) between Banach spaces as well as some properties shared by those notions. We are specially interested in narrowing down the difference between those concepts, and we show that, in many cases, the real difference between a coarse and a uniform embedding is in the uniform continuity of the map, but not in its continuity or in the uniform continuity of its inverse. We now describe the organization and results of this paper.

In Section \ref{background}, we give all the notation and background necessary for these notes. In Section \ref{sectionstrong} and Section \ref{sectioncoarsecont}, we  apply the methods of \cite{Ro} in order to give a partial answer to Problem \ref{problemaprin}. We prove the following.

\begin{theorem}\label{mainmain}
Let $X$ be a Banach space and $Y$ be a minimal Banach space. 

\begin{enumerate}[(i)]
\item If $X$ uniformly embeds into $Y$, then $X$ simultaneously coarsely and uniformly embeds into $Y$.
\item If $X$ coarsely embeds into $Y$, then $X$ simultaneously coarsely and homeomorphically embeds into $Y$ by a map with uniformly continuous inverse.
\end{enumerate}
\end{theorem}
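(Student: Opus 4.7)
The plan is to prove both parts by adapting Rosendal's multi-scale construction from \cite{Ro} and then exploiting the structural consequence of minimality, namely that every infinite-dimensional closed subspace of $Y$ contains an isomorphic copy of $Y$.

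For part (i), the starting point is Rosendal's Theorem 2 of \cite{Ro}, which upgrades any uniform embedding $f : X \to Y$ to a simultaneously uniform and coarse embedding $F : X \to \ell_p(Y)$ for every $p \in [1,\infty)$. The remaining work is to push $F$ into $Y$ itself. The plan is to construct a sequence of disjointly-supported closed subspaces $(Y_n)_{n \in \N}$ of $Y$, each isomorphic to $Y$, spanning a ``$\ell_p$-like'' subspace of $Y$: minimality provides a copy of $Y$ in any prescribed infinite-dimensional subspace, and a diagonal/iterative selection yields the required independence of the $Y_n$'s. Once such a decomposition is in hand, the $\ell_p(Y)$-valued map $F$ can be realized as a map into $Y$ via an identification $(y_n)_n \mapsto \sum_n T_n(y_n)$, where each $T_n : Y \to Y_n$ is a fixed isomorphism and the partial sums converge thanks to the chosen sum-structure on the $(Y_n)$.

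For part (ii), the strategy mirrors (i) but starts from a coarse (rather than uniform) embedding. The first step is to establish a coarse analogue of Rosendal's theorem: given a coarse embedding $f : X \to Y$, produce $F : X \to \ell_p(Y)$ that is simultaneously coarse, a homeomorphism onto its image, and whose inverse is uniformly continuous. Continuity of $F$ is recovered by smoothing the (a priori discontinuous) $f$ using a partition of unity on a sufficiently fine net, the $\ell_p$-aggregation of a sequence of scaled copies $\lambda_n f(2^n \cdot)$ encoding the large-scale expansion into small-scale separation needed for the uniform continuity of $F^{-1}$. The map is \emph{not} required to be uniformly continuous, which is exactly what fails for a general coarse embedding. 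Once such $F$ is obtained into $\ell_p(Y)$, the same minimality-based transfer used in (i) carries it into $Y$.

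The main obstacle I expect is the construction of the ``$\ell_p$-like'' decomposition inside a general minimal $Y$. Minimality produces copies of $Y$ in every infinite-dimensional subspace but gives no unconditional summability across such copies a priori. Two ways around this suggest themselves: (a) exploit any FDD or Schauder decomposition of $Y$ by replacing each block by a subspace isomorphic to $Y$ sitting inside it (available by minimality) and track the sum-norm via the FDD; or (b) avoid constructing a linear embedding $\ell_p(Y) \hookrightarrow Y$ altogether and instead rebuild Rosendal's construction directly inside $Y$, distributing each scale $\lambda_n f(2^n \cdot)$ into a freshly chosen disjoint isomorphic copy $Y_n \subset Y$ whose norms have been pre-arranged so that the pseudo-$\ell_p$ partial sums converge. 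Controlling the sum-norms of these iteratively selected copies is where the bulk of the technical work will sit, and is the step where I expect the argument to be most delicate.
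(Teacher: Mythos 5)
Your outline for part (ii)'s first stage (smooth the coarse embedding via a partition of unity to make it continuous, then run a multi-scale Rosendal-type aggregation) matches the paper's strategy, which does exactly this via its Theorem \ref{Thmcontcoarproj} and Lemma \ref{christian2}. But the transfer into $Y$ itself --- the step you yourself flag as the main obstacle --- is left genuinely open, and the two routes you sketch do not close it. Minimality does \emph{not} provide a family of ``independent'' copies $Y_n\cong Y$ with any summable or $\ell_p$-like joint structure: a diagonal selection of subspaces gives you nested or overlapping copies with no control whatsoever on the norm of a sum $\sum_n T_n(y_n)$, and a general minimal space need not have an unconditional FDD handed to you, so route (a) as stated has nothing to hang the sum-norm estimates on. Moreover, the difficulty is not really convergence of the partial sums (that is arranged for free by inserting geometric scaling factors $2^{-n}$ and using absolute convergence); the genuinely indispensable ingredient is a sequence of \emph{bounded projections} $P_n:E\to E_n$ with $E_m\subset\ker P_n$ for $m\neq n$, because the lower bounds --- expansion of the final map, and the uniform continuity of its inverse in (ii) --- are extracted by applying $P_n$ to isolate the $n$-th scale. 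Without those projections the aggregated map can be well defined and coarse yet fail to be expanding.

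The missing idea that makes everything work is Gowers' dichotomy: $Y$ contains either a hereditarily indecomposable subspace or an unconditional basic sequence, and minimality rules out the first alternative (an HI space is not isomorphic to its proper subspaces). Hence $Y$ contains an unconditional basic sequence $(e_j)$; partitioning its index set into infinitely many infinite sets $A_n$ and setting $E_n=\overline{\text{span}}\{e_j\mid j\in A_n\}$ yields exactly the subspaces with mutually annihilating bounded projections, and minimality then supplies isomorphic embeddings $T_n:Y\to E_n$ so that $T_n\circ\varphi$ gives a (uniform, resp.\ continuous coarse) embedding of $X$ into each $E_n$. At that point Rosendal's Lemma 16 (and its continuous-coarse analogue) applies directly inside $E\subset Y$, and there is no need to embed $\ell_p(Y)$ into $Y$ at all. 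As written, your proposal identifies the right shape of the argument but lacks the structural input (Gowers' dichotomy plus the unconditionality-derived projections) without which the construction cannot be completed.
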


Therefore,  Theorem \ref{mainmain} can be seen as a strengthening of Rosendal's result about uniform embeddings into $\ell_p$ mentioned above, which allows us to obtain some new examples (see Corollary \ref{cor1111}, and Corollary \ref{cor2222} below).  

In \cite{Os}, Theorem 5.1, M. Ostrovskii have shown that $\ell_2$ coarsely embeds into any Banach space containing a subspace with an unconditional basis and finite cotype.  We prove the following stronger result in Section \ref{sectionstrong}.

\begin{thm}\label{principal}
Let $X$ be an infinite dimensional Banach space with an unconditional basis and finite cotype. Then $\ell_2$ simultaneously coarsely and uniformly embeds into $X$.
\end{thm}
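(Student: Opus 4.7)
The plan is to upgrade Ostrovskii's coarse embedding $f_0\colon \ell_2\to X$ (Theorem~5.1 of \cite{Os}) to a map $f\colon \ell_2\to X$ which is simultaneously coarse and uniform, following the methodology of \cite{Ro}. A priori $f_0$ need not be uniformly continuous; the idea is to smooth it at every scale while preserving expansion at large scales.

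The first step is to exploit the unconditional basis $(e_n)$ of $X$. Ostrovskii's construction produces an embedding of the form $f_0(x)=\sum_n \phi_n(x)\,e_n$, with scalar functions $\phi_n\colon\ell_2\to\R$ detecting $x$ at different scales; finite cotype together with unconditionality guarantee that the series converges in $X$. To gain uniform continuity, I would replace each $\phi_n$ by a Lipschitz approximation $\tilde\phi_n$ (for instance, via an infimal convolution $\tilde\phi_n(x)=\inf_y(\phi_n(y)+L_n\|x-y\|)$, i.e.\ a Lipschitz mollification on $\ell_2$), chosen with Lipschitz constants $L_n$ and uniform approximation errors $\varepsilon_n:=\|\tilde\phi_n-\phi_n\|_\infty$ satisfying $\sum_n \varepsilon_n\|e_n\|_X<\infty$. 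The modified map $\tilde f(x):=\sum_n \tilde\phi_n(x)\,e_n$ then converges in $X$ and should satisfy $\omega_{\tilde f}(t)\to 0$ as $t\to 0_+$, by summing coordinate-wise Lipschitz estimates weighted against the unconditional basis.

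The main obstacle is to preserve the expansion of $f_0$. By the choice of approximation errors,
\[
\|\tilde f(x)-f_0(x)\|_X \;\leq\; D := \sum_n \varepsilon_n\|e_n\|_X
\]
uniformly in $x\in\ell_2$. Hence $\|\tilde f(x)-\tilde f(y)\|\geq \|f_0(x)-f_0(y)\|-2D$, and so $\rho_{\tilde f}(t)\geq \rho_{f_0}(t)-2D\to\infty$ as $t\to\infty$. The delicate point — where the ideas of \cite{Ro} are essential — is arranging simultaneously (i) uniform continuity at small scales (requiring nontrivial smoothing at every $n$), (ii) bounded perturbation (requiring $\varepsilon_n$ to be small), and (iii) good Lipschitz summability of $\tilde f$ (requiring $\sum_n L_n\|e_n\|_X$ to be controlled; this is where finite cotype of $X$ plays its key role, by relating the Lipschitz constant of $\tilde f$ to the Gaussian/Rademacher averages of the $L_n\|e_n\|_X$'s). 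Tuning the pair $(L_n,\varepsilon_n)$ along the basis, using unconditionality and finite cotype, should yield the simultaneous coarse and uniform embedding $\tilde f\colon\ell_2\to X$.
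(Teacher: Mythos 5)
The decisive gap is that your construction never addresses the condition $\rho_{\tilde f}(t)>0$ for all $t>0$, which is half of the definition of a uniform embedding. Your perturbation estimate $\rho_{\tilde f}(t)\geq \rho_{f_0}(t)-2D$ is only useful for large $t$ (it preserves the expansion property); for small $t$ it is vacuous, because Ostrovskii's $f_0$ is merely a coarse embedding and typically factors through a discretization, so $\rho_{f_0}(t)$ may equal $0$ for all $t$ below some threshold, and then $\rho_{f_0}(t)-2D<0$ there. No smoothing (infimal convolution or otherwise) combined with a uniformly bounded perturbation can restore uniform injectivity at small scales: a coarse embedding can be constant on every ball of radius $1$, and any map within bounded distance of it still fails to separate points at distance $<1$ in a quantitative way. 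Producing a single map that is simultaneously expanding at large scales, uniformly continuous, \emph{and} uniformly injective at all small scales is precisely the difficulty (it is the content of Problem~\ref{problemaprin}), and the proposal does not engage with it. The role you assign to finite cotype (controlling $\sum_n L_n\|e_n\|_X$ via Rademacher averages) is also not substantiated.

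The paper's proof takes a genuinely different route, and the difference is instructive. It does not start from Ostrovskii's coarse embedding at all. Instead: by Corollary 3.3 of \cite{AMM}, $\ell_2$ uniformly embeds into the \emph{bounded} set $B_{\ell_2}$; by Theorem 2.1 of \cite{OS} (Odell--Schlumprecht), $B_{\ell_2}$ is uniformly homeomorphic to $B_{X_n}$ for each block subspace $X_n=\overline{\text{span}}\{e_j\mid j\in A_n\}$, where $(A_n)_n$ partitions $\N$ into infinite sets --- this is where the unconditional basis and finite cotype are actually used. This yields uniform embeddings $\sigma_n:\ell_2\to X_n$ into infinitely many disjointly supported blocks, and Lemma~\ref{christian} (Rosendal) superposes rescaled copies $\sigma_n(n\Delta_n x)/(n\Delta_n L_n 2^n)$: the $n$-th summand certifies separation of points at distance $\geq 1/n$, while the normalizations keep the total sum convergent, coarse, and uniformly continuous. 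It is this multi-scale superposition across infinitely many complemented blocks --- not a coordinatewise mollification of one coarse map --- that simultaneously produces $\rho(t)>0$ for every $t>0$ and $\rho(t)\to\infty$.
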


In order to prove Theorem \ref{mainmain}(ii), we study how to approximate coarse maps $(M,d)\to (E,\|\cdot\|)$ by continuous coarse maps and what kind of properties of the original coarse map we can preserve. More precisely, in Section \ref{sectioncontcoarproj}, we prove  Theorem \ref{Thmcontcoarproj} below, which is a strengthening of Theorem 4.1 of \cite{Du}.

Let $E$ be a vector space, and let $A\subset E$. Then we denote the convex hull of $A$  by $\text{conv}(A)$.

\begin{thm}\label{Thmcontcoarproj}
Let $(M,d)$ be a metric space, and let $A\subset M$ be a closed subspace. Let $E$ be a normed space, and let $\varphi:M\to E$ be a  map such that $\varphi_{|A}$ is continuous. Then, for all $\delta>0$, there exists a continuous  map $\Phi:M\to \text{conv}(\varphi(M))$ such that $\Phi_{|A}=\varphi_{|A}$ and

$$\sup_{x\in M}\|\varphi(x)-\Phi(x)\|\leq \inf_{s>0}\omega_\varphi(s)+\inf_{s>0}\omega_{\varphi_{|A}}(s)+\delta.$$\hfill

\noindent In particular, if $\varphi$ is coarse (resp.  coarse embedding), so is $\Phi$.
\end{thm}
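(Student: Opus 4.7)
The plan is to construct $\Phi$ by a Dugundji-style partition of unity, but averaging local values of $\varphi$ (taken either at the ball center, or at a nearby point of $A$) rather than extending $\varphi|_A$. Write $\alpha := \inf_{s>0}\omega_\varphi(s)$ and $\beta := \inf_{s>0}\omega_{\varphi|_A}(s)$. Since $\omega_\varphi$ is nondecreasing, $\alpha$ is the one-sided limit $\lim_{s\to 0+}\omega_\varphi(s)$, and the case $\alpha=\infty$ makes the bound vacuous (a plain Dugundji extension of $\varphi|_A$ works there), so I may assume $\alpha<\infty$, hence $\beta\leq \alpha<\infty$. Fix $s_0,t_0>0$ with $\omega_\varphi(s_0)<\alpha+\delta/3$ and $\omega_{\varphi|_A}(t_0)<\beta+\delta/3$, and a small auxiliary scale $\eta\leq \min(s_0,t_0)/5$.

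For each $x\in M\setminus A$ put $r(x):=\min(\eta,d(x,A))/10>0$ (positive since $A$ is closed). By paracompactness of metric spaces, the open cover $\{B(x,r(x))\}_{x\in M\setminus A}$ of $M\setminus A$ admits a locally finite partition of unity $\{\psi_i\}_{i\in I}$ with $\text{supp}(\psi_i)\subset B(x_i,r(x_i))$ for suitable $x_i\in M\setminus A$. For each $i$, choose an ``anchor'' $z_i$: if $d(x_i,A)\geq\eta$ set $z_i:=x_i$, otherwise choose $a_i\in A$ with $d(x_i,a_i)<2\,d(x_i,A)$ and set $z_i:=a_i$. Finally define
$$\Phi(x):=\varphi(x)\ \text{for }x\in A,\qquad \Phi(x):=\sum_i\psi_i(x)\varphi(z_i)\ \text{for }x\in M\setminus A.$$
By construction $\Phi(M)\subset \text{conv}(\varphi(M))$ and $\Phi|_A=\varphi|_A$, and $\Phi$ is evidently continuous on the open set $M\setminus A$.

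The approximation estimate is the heart of the argument and is checked index by index, for $x\in M\setminus A$ with $\psi_i(x)>0$. In the ``far'' case $z_i=x_i$, one simply has $d(x,x_i)\leq r(x_i)\leq\eta/10\leq s_0$, hence $\|\varphi(x)-\varphi(z_i)\|\leq\omega_\varphi(s_0)\leq\alpha+\delta/3$. The ``near'' case $z_i=a_i\in A$ is where the real obstacle sits: $x\notin A$, so $\omega_{\varphi|_A}$ cannot be used directly. I bridge this by inserting a point $b\in A$ with $d(x,b)<2\,d(x,A)$ and splitting
$$\|\varphi(x)-\varphi(a_i)\|\leq\|\varphi(x)-\varphi(b)\|+\|\varphi(b)-\varphi(a_i)\|.$$
A triangle inequality through $x_i$, using $d(x,x_i)\leq d(x_i,A)/10$ and $d(x_i,a_i)<2\,d(x_i,A)<2\eta$, shows that $d(x,b)$ and $d(b,a_i)$ are each bounded by explicit constant multiples of $\eta$; the choice $\eta\leq\min(s_0,t_0)/5$ forces them into the good ranges $s_0$ and $t_0$, yielding $\|\varphi(x)-\varphi(a_i)\|\leq\omega_\varphi(s_0)+\omega_{\varphi|_A}(t_0)\leq\alpha+\beta+2\delta/3$. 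Convex-combining with the $\psi_i(x)$ gives $\|\varphi(x)-\Phi(x)\|<\alpha+\beta+\delta$.

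It remains to verify continuity at a point $a\in A$ and the coarse-preservation clauses. For $x_n\to a$ in $M\setminus A$, any index with $\psi_i(x_n)>0$ satisfies $d(x_i,A)\leq (10/9)\,d(x_n,A)\to 0$, so eventually lies in the ``near'' regime with $d(a_i,a)$ bounded by a fixed multiple of $d(x_n,a)$; continuity of $\varphi|_A$ at $a$ then makes $\|\varphi(a_i)-\varphi(a)\|$ uniformly small over contributing indices, whence $\Phi(x_n)\to\varphi(a)=\Phi(a)$. Finally, since $\Phi$ sits within uniform distance $C:=\alpha+\beta+\delta$ of $\varphi$, one has $\omega_\Phi\leq\omega_\varphi+2C$ and $\rho_\Phi\geq\rho_\varphi-2C$, which preserves both coarseness and the coarse embedding property. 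The principal difficulty I anticipate is precisely the near-case bookkeeping: the anchor $a_i$ is attached to $x_i$ rather than to $x$, so one must calibrate $\eta$ against both $s_0$ and $t_0$ to force both legs of the inserted triangle inequality to live in the preselected good scales.
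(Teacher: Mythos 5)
Your proof is correct, and it organizes the construction differently from the paper. The paper works in two stages: it first establishes a quantitative Dugundji extension theorem (Proposition \ref{prop50}), producing a continuous extension $\theta$ of $\varphi_{|A}$ to all of $M$ with $\|\theta(x)-\theta(a)\|\leq \omega_{\varphi_{|A}}(\lambda d(x,A)+d(x,a)+\gamma)$, and then blends $\theta$ (weighted by a bump $\psi_U$ supported on the $\gamma$-neighbourhood of $A$) with the values $\varphi(x_i)$ at centers of a locally finite small-diameter cover of $M\setminus A$. You collapse the two stages into a single partition of unity on $M\setminus A$ with adaptive radii $r(x)=\min(\eta,d(x,A))/10$, anchoring each bump at $\varphi(x_i)$ away from $A$ and at $\varphi(a_i)$ for a nearby $a_i\in A$ close to $A$; continuity at points of $A$ is then verified directly, the shrinking radii playing the role of property (ii) of the paper's Lemma \ref{lema50}. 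The key estimate is the same in spirit in both arguments --- bridge from $x$ to a point of $A$ at distance comparable to $d(x,A)$, paying $\omega_\varphi$ once and $\omega_{\varphi_{|A}}$ once --- and your bookkeeping ($d(x,b)<\tfrac{22}{10}\eta$ and $d(b,a_i)<\tfrac{43}{10}\eta$, both pushed below $s_0$ and $t_0$ by $\eta\leq\min(s_0,t_0)/5$) checks out, as do the continuity argument at $A$ (every contributing anchor satisfies $d(a_i,a)=O(d(x,a))$) and the final observation that a uniform perturbation by $C$ changes $\omega$ and $\rho$ by at most $2C$, so coarseness and the coarse embedding property survive. What your route buys is self-containedness (no separate extension lemma and no appeal to the refinement property of Lemma \ref{lema50}); what the paper's route buys is the intermediate Proposition \ref{prop50}, which is reused elsewhere and is of independent interest.
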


As a corollary of Theorem \ref{Thmcontcoarproj}, we get that, if $Y\subset X$ are Banach spaces, then the existence of a coarse retraction $X\to Y$ implies the existence of a continuous coarse retraction $X\to Y$. 

\begin{cor}\label{contcoarproj}
Let $X$ be a Banach space and $A\subset X$ be a closed subset. If there exists a coarse retraction $X\to A$, then there exists a continuous coarse retraction $X\to A$.
\end{cor}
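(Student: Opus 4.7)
The plan is to apply Theorem \ref{Thmcontcoarproj} essentially off the shelf, to the coarse retraction itself. Let $r:X\to A$ be a coarse retraction. Take $M=X$ (equipped with its norm metric), the closed subset to be $A$, the target normed space $E=X$, and $\varphi=r$. Since $r$ is a retraction, $r_{|A}=\mathrm{id}_A$ is $1$-Lipschitz and in particular continuous, so the continuity hypothesis on $\varphi_{|A}$ is automatic.

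Fixing any $\delta>0$, Theorem \ref{Thmcontcoarproj} then produces a continuous map $\Phi:X\to\text{conv}(r(X))$ with $\Phi_{|A}=r_{|A}=\mathrm{id}_A$, and by the final ``in particular'' clause, $\Phi$ is coarse because $r$ is. At this point $\Phi$ is already a continuous coarse map that fixes $A$ pointwise; the only remaining issue is to verify that its image actually lies in $A$ and not merely in $\text{conv}(r(X))\subset\text{conv}(A)$.

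This is the one step where the hypothesis on $A$ matters, and it is also the step I expect to be the main obstacle if one tried to push the statement beyond the convex case. In the setting motivating the corollary (``if $Y\subset X$ are Banach spaces,'' as stated in the paragraph preceding it), $A$ is a closed linear subspace, hence convex. Since $r(X)\subset A$ and $A$ is convex, one has $\text{conv}(r(X))\subset A$, so $\Phi:X\to A$ is the desired continuous coarse retraction. In short, the only content beyond invoking Theorem \ref{Thmcontcoarproj} is the containment $\text{conv}(r(X))\subset A$, which is free once $A$ is convex and which is what forces the natural scope of the corollary to be closed convex sets (in particular closed subspaces).
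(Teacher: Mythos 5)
Your proof is correct and is essentially the paper's: the paper routes the application of Theorem \ref{Thmcontcoarproj} through an intermediate statement (Corollary \ref{contcoarproj2}, producing a continuous retraction onto $\text{conv}(A)$ using $\omega_{\varphi_{|A}}(t)=t$), but the content is identical to your direct invocation. Your closing remark is well taken: the corollary as stated speaks of a closed \emph{subset} $A$, yet both your argument and the paper's only land the image in $\text{conv}(A)$, so the result really requires $A$ convex (in particular a closed subspace, which is the case the paper actually uses).
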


In Section \ref{sectioncoarsecont}, we use techniques of \cite{Ro}, and Theorem \ref{Thmcontcoarproj}, in order to prove Theorem \ref{mainmain}(ii). In particular, as a subproduct of Theorem \ref{mainmain}(ii), we obtain the following.

\begin{thm}\label{homeocoarseemb}
Let $X$ and $Y$ be Banach spaces, and let $\mathcal{E}$ be a $1$-unconditional basic sequence.  If $X$ coarsely embeds into $Y$, then there exists a continuous coarse embedding $X\to (\oplus Y)_\mathcal{E}$ with uniformly continuous inverse. In particular, $X$ simultaneously homeomorphically and coarsely embeds into $(\oplus Y)_\mathcal{E}$.
\end{thm}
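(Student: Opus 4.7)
As the introduction indicates, Theorem~\ref{homeocoarseemb} is extracted as a by-product of the proof of Theorem~\ref{mainmain}(ii): that proof actually builds a series in $(\oplus Y)_{\mathcal{E}}$ first, and only invokes minimality of $Y$ at the very end in order to press the image back into a single copy of $Y$. The plan is therefore to run the same Rosendal-style construction from~\cite{Ro}, adapted to an arbitrary $1$-unconditional basic sequence $\mathcal{E}$, and omit the final minimality step.

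Concretely, I would start from the given coarse embedding $f:X\to Y$ and first apply Theorem~\ref{Thmcontcoarproj} (with $A=\emptyset$) to replace $f$ by a \emph{continuous} coarse embedding at uniformly bounded distance; I keep calling it $f$. Enumerating $\mathcal{E}$ as $(e_n)_n$, I would then define $F:X\to(\oplus Y)_{\mathcal{E}}$ by a series of rescaled copies of $f$, schematically
\[
F(x)=\sum_{n}\alpha_n\bigl(f(\lambda_n x)-f(0)\bigr)\,e_n,
\]
with positive sequences $(\alpha_n)$ and $(\lambda_n)$ tuned, in the spirit of~\cite{Ro}, so that the series converges unconditionally for every $x$, so that its tails are uniformly small on bounded sets (yielding continuity of $F$), so that the coordinatewise upper bound $\|F(x)-F(y)\|\le\bigl\|\sum_n\alpha_n\omega_f(\lambda_n\|x-y\|)e_n\bigr\|_{\mathcal{E}}$ stays finite for every $t$ (yielding coarseness of $F$), and so that for each fixed $n$ the coordinatewise lower bound $\|F(x)-F(y)\|\ge\alpha_n\rho_f(\lambda_n\|x-y\|)$ diverges with $t$ (yielding expansion of $F$). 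Each of these steps uses only the monotonicity of the norm on a $1$-unconditional basic sequence under coefficient domination.

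The main obstacle is securing $\rho_F(t)>0$ for \emph{every} $t>0$, which is precisely what forces $F^{-1}$ to be uniformly continuous. Since $f$ is only a coarse embedding, $\rho_f$ may vanish on an initial interval, so no single coordinate of $F$ can separate close points. Rosendal's device to overcome this is exactly the family of scales $(\lambda_n)$: for each fixed $t>0$, one picks $n$ so large that $\lambda_n t$ lies in the range where $\rho_f$ is already strictly positive, whence $\rho_F(t)\ge\alpha_n\rho_f(\lambda_n t)>0$. The delicate part, and the real work of the proof, is to choose $(\alpha_n)$ and $(\lambda_n)$ so that this lower bound coexists with the finiteness of the upper bound on $\omega_F$ and with convergence of the series; the $1$-unconditionality of $\mathcal{E}$ is exactly what allows a single choice of coefficients to control simultaneously the upper and lower estimates, so that the argument extends from the unit vector basis of $\ell_p$ in~\cite{Ro} to arbitrary such $\mathcal{E}$.
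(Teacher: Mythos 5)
Your proposal is correct and follows essentially the same route as the paper: first upgrade the coarse embedding to a continuous one via Theorem~\ref{Thmcontcoarproj}, then run the Rosendal-style series of rescaled copies along the coordinate subspaces $E_n\subset(\oplus Y)_{\mathcal{E}}$ with the scales chosen so that one coordinate yields expansion and arbitrarily large scales yield $\rho>0$ at every positive distance. The paper merely packages this second step as Lemma~\ref{christian2} (its adaptation of Lemma~16 of \cite{Ro}), which is exactly the construction you sketch.
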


In Section \ref{sectioncoarsehomeo}, we look at Kalton's example of separable Banach spaces which are coarsely equivalent but are not uniformly homeomorphic, and show that we can actually get a stronger result. Precisely, we prove the following.

\begin{thm}\label{homeocoarseequiv}
Let $X$ and $Y$ be Banach spaces, and $Q:Y\to X$ be a quotient map. If $Q$ admits a coarse section, then $Q$ admits a continuous coarse section. In particular, $Y$ is simultaneously  homeomorphically and coarsely  equivalent to $\text{Ker}(Q)\oplus X$. 
\end{thm}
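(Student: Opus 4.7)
The plan is to build the continuous coarse section in two stages: first, use Theorem \ref{Thmcontcoarproj} to regularize the given coarse section $S:X\to Y$ into a continuous coarse map $\Phi:X\to Y$ staying within a uniformly bounded distance of $S$; then correct this bounded error so that the map actually lands in the correct fiber of $Q$, using the Bartle--Graves continuous selection theorem. Note first that any section of $Q$ is automatically $1$-Lipschitz from below, since $\|Q(y_1)-Q(y_2)\|\le \|y_1-y_2\|$ forces $\|S(x_1)-S(x_2)\|\ge \|x_1-x_2\|$, so a coarse section is in fact already a coarse embedding.

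For the first stage, fix any $x_0\in X$, let $A=\{x_0\}$, and apply Theorem \ref{Thmcontcoarproj} to $\varphi=S$ with $\delta=1$. Since $S$ is coarse we have $\inf_{s>0}\omega_S(s)<\infty$, and trivially $\omega_{S_{|A}}(s)=0$, so the theorem yields a continuous coarse map $\Phi:X\to Y$ with $\sup_{x\in X}\|\Phi(x)-S(x)\|\le C$ for some constant $C>0$. Applying $Q$ gives $\|Q(\Phi(x))-x\|=\|Q(\Phi(x)-S(x))\|\le C$ for every $x\in X$. By the Bartle--Graves theorem, choose a continuous positively homogeneous section $\sigma:X\to Y$ of $Q$ satisfying $\|\sigma(z)\|\le M\|z\|$ for some $M>0$, and set
$$\Psi(x)\;=\;\Phi(x)\;+\;\sigma\bigl(x-Q(\Phi(x))\bigr).$$
Then $\Psi$ is continuous, $Q\Psi(x)=Q\Phi(x)+(x-Q\Phi(x))=x$ (so $\Psi$ is a section of $Q$), and $\|\Psi(x)-\Phi(x)\|\le MC$. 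Hence $\Psi$ differs from the coarse $\Phi$ by a uniformly bounded term, and so is itself a continuous coarse section of $Q$.

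For the ``in particular'' part, equip $\text{Ker}(Q)\oplus X$ with the sum norm and consider the bijection $\Theta:\text{Ker}(Q)\oplus X\to Y$ given by $\Theta(k,x)=k+\Psi(x)$, with inverse $y\mapsto\bigl(y-\Psi(Q(y)),\,Q(y)\bigr)$. Both maps are continuous by continuity of $\Psi$ and $Q$, and both are coarse by a direct computation using $\omega_\Psi<\infty$ and that $Q$ is $1$-Lipschitz. Expansion follows from the identities $\Theta(k,x)-\Theta(k',x')=(k-k')+(\Psi(x)-\Psi(x'))$ and $Q\Theta(k,x)=x$: if $\|\Theta(k,x)-\Theta(k',x')\|$ is bounded, then applying $Q$ shows $\|x-x'\|$ is bounded, hence $\|\Psi(x)-\Psi(x')\|$ is bounded, and therefore so is $\|k-k'\|$. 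Thus $\Theta$ is simultaneously a homeomorphism and a coarse equivalence between $\text{Ker}(Q)\oplus X$ and $Y$.

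The main obstacle is the passage from the continuous coarse approximation $\Phi$ to a genuine section of $Q$: one needs a continuous correction that pushes $\Phi(x)$ into the fiber $Q^{-1}(x)$, is bounded on bounded subsets of $X$, and preserves the coarse bound. Bartle--Graves provides exactly such a global continuous (positively homogeneous) selection, while Theorem \ref{Thmcontcoarproj} provides the initial continuous coarse approximation with uniformly bounded error, so the two tools combine cleanly to produce the desired continuous coarse section $\Psi$.
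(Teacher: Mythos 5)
Your argument is correct, but it replaces the heart of the paper's proof with a genuinely different (and shorter) device. The paper never invokes the Bartle--Graves theorem: instead, after regularizing the rescaled sections $\varphi_n(x)=\varphi(nx)/n$ via Theorem \ref{Thmcontcoarproj}, it repairs the failure to be a section by an iterative geometric-series correction $\psi(x)=\sum_{n\ge 0}h(g^n(x))$ with $g(x)=x-Q(h(x))$ (Lemma \ref{contsec}), producing continuous sections of $\partial B_X$ of cL-type $(L,\eps)$ with $L$ fixed and $\eps$ arbitrarily small, and then glues these radially into a global continuous coarse section using a continuous version of Kalton's interpolation lemma (Lemma \ref{teckal} and Proposition \ref{existcontsec}). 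Your one-shot correction $\Psi=\Phi+\sigma\circ(\mathrm{Id}-Q\Phi)$, with $\sigma$ a positively homogeneous Bartle--Graves selection satisfying $\|\sigma(z)\|\le M\|z\|$, does land in the right fibers, is continuous, and stays within uniformly bounded distance of the coarse map $\Phi$, so it is indeed a continuous coarse section; the final identification $y\mapsto(y-\Psi(Q(y)),Q(y))$ is exactly the paper's. What you lose relative to the paper is quantitative control: the paper's route shows the continuous section can be taken with asymptotically the same coarse Lipschitz constant as the original one (cL-type $(L,\eps)$ on the sphere for every $\eps$), which is the kind of precision Kalton's framework requires, whereas Bartle--Graves gives no such control and imports an external selection theorem. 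Two small points to tidy: your inequality $\|Q(y_1)-Q(y_2)\|\le\|y_1-y_2\|$ presumes $\|Q\|\le 1$, which the paper's definition of quotient map does not guarantee, so constants such as $\|Q\|C$ should appear in your bounds (harmlessly); and the lower-Lipschitz remark about sections, while true up to the factor $\|Q\|^{-1}$, is not actually needed for the theorem.
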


\begin{cor}\label{homeocoarseequivC}
There exist separable Banach spaces $X$ and $Y$ which are simultaneously homeomorphically and  coarsely equivalent but not uniformly homeomorphic.
\end{cor}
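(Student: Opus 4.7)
The plan is to bootstrap Kalton's example (\cite{Ka}, Theorem 8.9) via Theorem \ref{homeocoarseequiv}. Kalton produced separable Banach spaces that are coarsely equivalent but not uniformly homeomorphic; crucially, his construction is not obtained by exhibiting some abstract coarse equivalence, but rather by building a short exact sequence
\[
0 \longrightarrow Z \longrightarrow Y \xrightarrow{\ Q\ } X \longrightarrow 0
\]
of separable Banach spaces such that the quotient map $Q$ admits a coarse (in fact bounded, Lipschitz-for-large-distances) section $\sigma:X\to Y$, while $Y$ and $Z\oplus X$ fail to be uniformly homeomorphic. So the first step is simply to extract from \cite{Ka} this pair $(Y,\,Z\oplus X)$, together with the coarse section $\sigma$ of the quotient $Q$.

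Once we have $\sigma$ in hand, the second step is a direct appeal to Theorem \ref{homeocoarseequiv}: since $Q$ admits a coarse section, it admits a continuous coarse section, and consequently $Y$ is simultaneously homeomorphically and coarsely equivalent to $\operatorname{Ker}(Q)\oplus X = Z\oplus X$. Setting the two spaces in the statement to be $Y$ and $Z\oplus X$ then gives us the ``simultaneously homeomorphically and coarsely equivalent'' half of the corollary for free.

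Finally, for the ``but not uniformly homeomorphic'' half, we simply quote Kalton's original conclusion, which is about precisely this pair of spaces and is unaffected by our passage from a coarse section to a continuous one. I expect the only real subtlety to be making sure that Kalton's example really is of the form $Q:Y\to X$ with a coarse section, rather than merely an abstract coarse equivalence. This should be verified by inspecting \cite{Ka}, Section 8, where the coarse equivalence between $Y$ and $Z\oplus X$ is constructed from exactly such a section; modulo that bookkeeping, the corollary is an immediate consequence of Theorem \ref{homeocoarseequiv}.
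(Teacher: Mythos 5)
Your proposal is correct and follows essentially the same route as the paper: extract from Kalton's construction (Proposition 8.4 and Theorem 8.8 of \cite{Ka}) a quotient map $Q:Y\to X$ with a coarse section, apply Theorem \ref{homeocoarseequiv} to conclude that $Y$ and $\text{Ker}(Q)\oplus X$ are simultaneously homeomorphically and coarsely equivalent, and invoke Kalton's result (that $X$ does not coarse Lipschitz embed into $Y$ by a uniformly continuous map) to rule out a uniform homeomorphism. The ``bookkeeping'' you flag is exactly what the paper does by citing those two specific results of \cite{Ka}.
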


At last, we dedicate Section \ref{sectionsums} to study unconditional sums of coarsely equivalent (resp. uniformly homeomorphic) Banach spaces. In \cite{Ka2}, Theorem 4.6(ii), Kalton had shown that if $X$ and $Y$ are coarsely equivalent (resp. uniformly homeomorphic), then $\ell_p(X)$ and $\ell_p(Y)$ are coarsely equivalent (resp. uniformly homeomorphic). However, as Kalton himself noticed, his proof seems to be more complicated than necessary, and relies on the concepts of close (resp. uniformly close) Banach spaces. In Section \ref{sectionsums}, we present an easy argument which give us Kalton's result as a corollary.

\begin{thm}\label{trikal}
Say $X$ and $Y$ are two coarsely equivalent (resp. uniformly homeomorphic, or simultaneously homeomorphically and coarsely equivalent) Banach spaces. Let $\mathcal{E}$ be a normalized $1$-unconditional  basic sequence. Then $(\oplus X)_\mathcal{E}$ and $(\oplus Y)_\mathcal{E}$ are  coarsely equivalent (resp. uniformly homeomorphic, or simultaneously homeomorphically and coarsely equivalent).
\end{thm}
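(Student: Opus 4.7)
The idea is to construct the equivalence $F : (\oplus X)_{\mathcal{E}} \to (\oplus Y)_{\mathcal{E}}$ by applying a normalized version of the given equivalence $f : X \to Y$ coordinate-wise, handling all three cases in parallel. By translating, I may assume $f(0) = 0$. Since $f$ is coarse between Banach spaces, a standard chaining argument gives an affine bound on the expansion modulus, $\omega_f(s) \leq As + B$, so in particular $\|f(x)\| \leq A\|x\| + B$. I then modify $f$ inside the unit ball by setting $\tilde f(x) := \|x\| f(x/\|x\|)$ for $0 < \|x\| \leq 1$ (with $\tilde f(0) := 0$ and $\tilde f := f$ for $\|x\| \geq 1$). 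This is a uniformly bounded perturbation of $f$, so $\tilde f$ preserves the coarse equivalence (resp.\ uniform homeomorphism, resp.\ simultaneous equivalence), and by construction $\|\tilde f(x)\| \leq C\|x\|$ for all $x$, where $C := \max(A+B,\omega_f(1))$.

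Now define $F((x_n)) := (\tilde f(x_n))$. By $1$-unconditionality of $\mathcal{E}$ and the coordinate bound $\|\tilde f(x_n)\| \leq C\|x_n\|$, this is a well-defined map $(\oplus X)_{\mathcal{E}} \to (\oplus Y)_{\mathcal{E}}$ with $\|F((x_n))\|_{(\oplus Y)_{\mathcal{E}}} \leq C\|(x_n)\|_{(\oplus X)_{\mathcal{E}}}$. A similarly constructed $G : (\oplus Y)_{\mathcal{E}} \to (\oplus X)_{\mathcal{E}}$, built from the corresponding inverse of $f$, will provide the reverse direction; the fact that $G \circ \tilde f$ and $\tilde f \circ G$ sit at bounded coordinate-wise displacement from the identities transfers to the sums, giving the cobounded (resp.\ surjectivity) condition.

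The hard part is verifying that $F$ has the correct modulus globally. Coordinatewise one has two useful estimates, the modulus bound $\|\tilde f(x_n) - \tilde f(y_n)\| \leq \omega_{\tilde f}(\|x_n - y_n\|)$ and the linear-near-zero bound $\|\tilde f(x_n) - \tilde f(y_n)\| \leq C(\|x_n\| + \|y_n\|)$. In the uniformly homeomorphic case, $\omega_{\tilde f}(s) \to 0$ as $s \to 0$, and $1$-unconditionality promotes this coordinate estimate directly to $\omega_F(t) \to 0$. The coarse case is more delicate: the affine bound $\omega_{\tilde f}(s) \leq A's + B'$ carries an additive constant $B'$ whose naive coordinate-wise sum $\sum B' e_n$ diverges in $\mathcal{E}$. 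The resolution is a head/tail partition of the index set, using the affine modulus on the head (indices where $\|x_n\|$ or $\|y_n\|$ exceeds a threshold proportional to $t$) and the linear-near-zero bound on the tail, then invoking $1$-unconditionality to merge the two pieces into a single uniform bound depending only on $t$. The simultaneously homeomorphic and coarse case combines both kinds of estimates. Making this partition argument rigorous and genuinely uniform in $(x_n),(y_n)$, rather than merely in $t$, is the technical heart of the proof and the principal obstacle to overcome.
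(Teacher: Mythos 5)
There is a genuine gap, and it sits exactly at the point you flag as ``the technical heart'': the map $F=(\tilde f)_n$, built by applying the \emph{same} normalized map $\tilde f$ in every coordinate, is simply not coarse in general, so no head/tail partition can rescue it. Concretely, take $X=Y=\R$ and let $f(s)=\lfloor s\rfloor$ for $s\geq 0$, $f(s)=-f(-s)$ for $s<0$; this is a coarse equivalence at distance $1$ from the identity with $f(0)=0$, and your normalization only alters it inside the unit ball. With $\mathcal{E}$ the $\ell_2$ basis, let $x=\sum_{n\leq N}(2-\tfrac1N)e_n$ and $y=\sum_{n\leq N}2e_n$. Then $\|x-y\|=N^{-1/2}\leq 1$ while $\|F(x)-F(y)\|=\sqrt N$, so $\omega_F(1)=\infty$. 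All of these indices lie in your ``head'' (both coordinates have norm about $2$), and the head can contain arbitrarily many indices, so the additive constant $B'$ contributes $B'\,\|\sum_{\text{head}}e_n\|$, which is unbounded; the linearization near $0$ is irrelevant because the bad behaviour of $\tilde f$ occurs at $\|x_n\|\approx 2$. The same phenomenon defeats the uniform case: the claim that $1$-unconditionality ``directly'' promotes $\omega_{\tilde f}(s)\to 0$ to $\omega_F(t)\to 0$ is false (take $f$ uniformly continuous with a $\sqrt{\cdot}$-type modulus at an interior point and repeat the computation in $\ell_2$: one gets $\|F(x)-F(y)\|\approx N^{1/4}\sqrt{t}$).

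The missing idea, which is the paper's key device, is to apply a \emph{different} rescaling of $\varphi$ in each coordinate, namely $\varphi_n(\cdot)=2^{-n}\varphi(2^n\cdot)$. Then $\omega_{\varphi_n}(t)\leq Lt+L2^{-n}$, so the additive constants are geometrically summable against the normalized $1$-unconditional basis and $\Phi=(\varphi_n)_n$ satisfies $\omega_\Phi(t)\leq Lt+L$; for uniform continuity one splits the coordinates into a head $n\leq N$ that is a fixed finite set determined by the decay of $2^{-n}$ (not by the sizes of the coordinates of $x$ and $y$, which is why the argument is uniform in $x,y$) and a tail controlled by $L\|x_n-y_n\|+L2^{-n}$. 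The inverse direction is handled by rescaling the coarse inverse $\psi$ of $\varphi$ in the same dyadic way, since $\varphi_n^{-1}(\cdot)=2^{-n}\varphi^{-1}(2^n\cdot)$. Your surrounding architecture (coordinatewise map, coordinatewise inverse, coboundedness passing to the sum) matches the paper's, but without the coordinate-dependent rescaling the construction fails at the first step.
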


\noindent For a  stronger result, see Theorem \ref{geral} below.

\section{Background and notation.}\label{background}

\subsection{Banach space theory.}

If $(M,d)$ is a metric space, $x\in M$, and $\eps>0$, we denote by $B(x,\eps)$ the open ball of radius $\eps$ centered at $x$. Let $X$  be a Banach space. We denote the closed unit ball of $X$ by $B_X$, and its unit sphere by $\partial B_X$. If $Y$ is also a Banach space, we write $X\cong Y$ if $X$ is linearly isomorphic to $Y$, and we write $X\equiv Y$ if $X$ is linearly isometric to $Y$. Given a Banach space $X$ with norm $\|\cdot\|_X$, we simply write $\|\cdot\|$ as long as it is clear from the context to which space  the element inside the norm belongs.

Let $(X_n)_n$ be a sequence of Banach spaces. Let $\mathcal{E}=(e_n)_n$ be a $1$-unconditional basic sequence. We define the sum $(\oplus _n X_n)_{\mathcal{E}}$ to be the space of sequences $(x_n)_n$, where $x_n\in X_n$, for all $n\in\N$, such that 

$$\|(x_n)_n\|\vcentcolon =\|\sum_{n\in\N}\|x_n\|e_n\|<\infty.$$\hfill

\noindent One can check that $(\oplus _n X_n)_\mathcal{E}$ endowed with the norm $\|\cdot\|$ defined above is a Banach space.  If the $X_n$'s are all the same, say $X_n=X$, for all $n\in\N$, we write $(\oplus X)_\mathcal{E}$. Whenever $\mathcal{E}$ is the standard basis of $\ell_p$ (resp. $c_0$), for some $p\in[1,\infty)$, we write $(\oplus_nX_n)_{\ell_p}$ (resp. $(\oplus_nX_n)_{c_0}$) instead.

Let $X$ and $Y$ be Banach spaces. A linear map $Q: Y\to X$ is called a \emph{quotient map} if it is bounded and surjective. By the open mapping theorem, quotient maps are always open. An infinite dimensional Banach space $X$ is called \emph{minimal} if $X$ isomorphically embeds into all of its infinite dimensional subspaces.

\subsection{Nonlinear embeddings.}\label{coarsedef}

In this subsection, let $(M,d)$ and $(N,\partial)$ be metric spaces and $f:(M,d)\to (N,\partial)$ be a map. We refer to the introduction of these notes for the definitions of coarse maps, expanding maps, cobounded maps, coarse embeddings (resp. uniform embeddings), and coarse equivalences (resp. uniform homeomorphisms).

\begin{remark}\label{remarkcoarseinv}
A coarse function does not need to be continuous, and a coarse equivalence does not need to be either injective or surjective. However, if $f:M\to N$ is a coarse equivalence, then there exists a coarse equivalence $g:N\to M$ such that

\begin{align}\label{coarseinve}
\sup_{x\in M}d(x,g(f(x)))<\infty\ \ \text{ and }\ \ \sup_{y\in N}\partial(y,f(g(y)))<\infty.
\end{align}\hfill

\noindent Indeed,  as $f$ is a coarse equivalence, let $\eps=\sup_{y\in N}\partial(y,f(M))<\infty$, and  let us define $g:N\to M$ as follows. For each $y\in N$, pick $x_y\in M$ such that $\partial(y,f(x_y))\leq 2\eps$, and set $g(y)=x_y$. It is easy to check that $g$ is a coarse equivalence and that (\ref{coarseinve}) holds. A coarse map $g:N\to M$  satisfying (\ref{coarseinve}) is called \emph{a coarse inverse} of $f$. In fact, a coarse map $f:M\to N$ is a coarse equivalence  if and only if  $f$ has a coarse inverse.
\end{remark}

It is worth noticing that $f$ is uniformly continuous  if and only if  $\lim_{t\to 0_+}\omega_f(t)=0$, and that the inverse $f^{-1}:f(M)\to M$ exists and  is uniformly continuous  if and only if  $\rho_f(t)>0$, for all $t> 0$. If $f$ is simultaneously a coarse embedding and a uniform embedding, then we call $f$ a \emph{strong embedding}.

We call $f$ \emph{Lipschitz} if  there exists some $L\geq 0$ such that $\omega_f(t)\leq Lt$, for all $t\geq 0$, and we call $f$ $L$-Lipschitz if we want to specify the constant $L$. It is easy to see that, if there exists $L> 0$ such that $\rho_f(t)\geq Lt$, then $f^{-1}:f(M)\to M$  exists and it is Lipschitz. If $f$ and $f^{-1}_{|f(M)}$ are Lipschitz, then $f$ is    a \emph{Lipschitz embedding}. A surjective Lipschitz embedding is called a \emph{Lipschitz isomorphism}, and the spaces are called \emph{Lipschitz isomorphic}.

 Let $a,b>0$. A subset $A$ of a metric space $(M,d)$ is called an  \emph{$(a,b)$-net} if  $d(x,A)< b$, for all $x\in M$, and $d(x,y)\geq a$, for all $x,y\in A$, with $x\neq y$. A subset $A$ is called a \emph{net} if it is an $(a,b)$-net for some $0<a\leq b$.

The map $f$ is called \emph{coarse Lipschitz} if there exists $L\geq 0$ such that $\omega_f(t)\leq Lt+L$. If $f$ is coarse Lipschitz and there exists $L>0$ such that $\rho_f(t)\geq Lt-L$, then $f$ is called a \emph{coarse Lipschitz embedding}. If $f$ is cobounded and coarse Lipschitz, then $f$ is a \emph{coarse Lipschitz equivalence}. Sometimes it will be necessary to keep track of the constants above. Therefore, if $\omega_f(t)\leq Lt+\eps$, for some $L,\eps\geq 0$, we say that $f$ is \emph{of cL-type $(L,\eps)$}. 

We finish this subsection with two propositions which are important for the understanding of the different notions of embeddings and equivalences above. See \cite{Ka3}, Lemma 1.4 and Proposition 1.5.

\begin{prop}\label{KALON1}
Let $X$ be a Banach space, $M$ be a metric space, and consider a map $f: X\to M$. Then the following are equivalent.

\begin{enumerate}[(i)]
\item $f$ is a coarse map, and
\item $f$ is a coarse Lipschitz map.
\end{enumerate}

\noindent In particular, if $f$ is uniformly continuous, then $f$ is both coarse and coarse Lipschitz. 
\end{prop}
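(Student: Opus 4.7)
The plan is to prove the non-trivial direction $(i) \Rightarrow (ii)$ by a chaining argument along affine segments of $X$. The implication $(ii) \Rightarrow (i)$ is immediate: if $\omega_f(t) \leq Lt + L$ for all $t \geq 0$, then $\omega_f(t) < \infty$ for all $t \geq 0$, so $f$ is coarse by definition.

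For the forward direction, the first observation is that coarseness of $f$ gives a finite constant $C := \omega_f(1)$. Given $x, y \in X$ with $\|x - y\| = t > 0$, I would set $n := \lceil t \rceil$ and consider the uniform subdivision
$$x_k := x + \frac{k}{n}(y - x), \qquad k = 0, 1, \ldots, n,$$
of the affine segment from $x$ to $y$, so that $\|x_k - x_{k+1}\| = t/n \leq 1$ for every $k$. Then, by the triangle inequality in $M$ together with the definition of $\omega_f$,
$$d(f(x), f(y)) \leq \sum_{k=0}^{n-1} d(f(x_k), f(x_{k+1})) \leq n\, \omega_f(1) \leq (t+1)\,C.$$
Since this bound is independent of the particular $x, y$ with $\|x-y\| \leq t$, one obtains $\omega_f(t) \leq Ct + C$ for all $t \geq 0$, i.e.\ $f$ is coarse Lipschitz with constant $L = C$.

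For the ``in particular'' clause, if $f$ is uniformly continuous then $\omega_f(\delta) \leq 1$ for some $\delta > 0$; running the same chaining with $n = \lceil t/\delta \rceil$ pieces of length $\leq \delta$ yields $\omega_f(t) \leq \lceil t/\delta\rceil < \infty$ for every $t \geq 0$, so $f$ is coarse, and then by the implication just established $f$ is also coarse Lipschitz.

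The argument presents no real obstacle; its substance lies entirely in the use of the affine structure of the Banach space domain, which allows any two points to be joined by a chain of short increments whose length adds up to the original distance. The proposition would fail outright for a general metric space domain (e.g.\ for a discrete metric space on which every map is trivially coarse but rarely coarse Lipschitz), and the hypothesis that $X$ is a Banach space is exactly what is needed to collapse the two a priori distinct notions into one.
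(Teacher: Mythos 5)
Your argument is correct and is the standard one: the paper does not prove this proposition but cites Kalton (\cite{Ka3}, Lemma 1.4 and Proposition 1.5), where the proof is exactly this chaining along affine segments, exploiting the metric convexity of the Banach space domain. (Only your parenthetical aside is slightly off: on a set with the $0$--$1$ discrete metric a coarse map must have bounded image and is then automatically coarse Lipschitz; a better counterexample is a domain whose points are separated by gaps growing to infinity, but this does not affect the proof itself.)
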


\begin{prop}\label{KALON2}
Let $X$ and $Y$ be infinite dimensional Banach spaces. 	Then the following are equivalent.

\begin{enumerate}[(i)]
\item $X$ is coarsely equivalent to $Y$,
\item $X$ is coarse Lipschitz equivalent to $Y$, and 
\item any net of $X$ is Lipschitz isomorphic to any net of $Y$.
\end{enumerate}

\noindent Moreover, all the conditions above hold if 

\begin{enumerate}[(i)]\setcounter{enumi}{3}
\item $X$ is uniformly homeomorphic to $Y$.
\end{enumerate}

\end{prop}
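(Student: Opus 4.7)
The plan is to leverage Proposition \ref{KALON1} as the main engine, using it to pass from coarse maps to coarse Lipschitz maps whenever the domain is a Banach space, and then to trade additive slack for a multiplicative Lipschitz constant via the separation of nets. The implication (ii) $\Rightarrow$ (i) is immediate, and for (i) $\Rightarrow$ (ii) I would take a coarse equivalence $f: X \to Y$, extract a coarse inverse $g$ via Remark \ref{remarkcoarseinv}, and apply Proposition \ref{KALON1} to both $f$ and $g$ (Banach space domains) to obtain coarse Lipschitz maps, while coboundedness is automatically preserved.

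For (ii) $\Rightarrow$ (iii), I would fix a coarse Lipschitz equivalence $f$ of cL-type $(L,\varepsilon)$ with coarse inverse $g$ of cL-type $(L',\varepsilon')$, and arbitrary $(a,b)$-net $N_X \subset X$ and $(a',b')$-net $N_Y \subset Y$. Defining $\tilde{f}: N_X \to N_Y$ by sending each $x \in N_X$ to some $\tilde{f}(x) \in N_Y$ with $\|f(x) - \tilde{f}(x)\| < b'$, the bound $\|x-y\| \geq a$ for distinct $x,y$ yields
\[
\|\tilde{f}(x) - \tilde{f}(y)\| \leq L\|x-y\| + \varepsilon + 2b' \leq \Bigl(L + \tfrac{\varepsilon + 2b'}{a}\Bigr)\|x-y\|,
\]
so $\tilde{f}$ is Lipschitz. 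A parallel construction produces $\tilde{g}: N_Y \to N_X$, and the coarse inverse relation forces $\tilde{g} \circ \tilde{f}$ to stay within bounded distance of the identity on $N_X$; a combinatorial adjustment, allowed by the $a$-separation, then promotes the pair to a genuine mutual Lipschitz inverse. The ``any pair of nets'' clause in (iii) follows by applying the same rounding to the identity map, which shows that any two nets of the same space are themselves Lipschitz isomorphic.

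For (iii) $\Rightarrow$ (i), given a Lipschitz isomorphism $T: N_X \to N_Y$, I would fix a retraction $\pi: X \to N_X$ sending each $x$ to some net point within distance $b$, and set $f = T \circ \pi$; this $f$ is coarse Lipschitz (both $\pi$ and $T$ are), cobounded (the image contains $N_Y$), and expanding (from Lipschitzness of $T^{-1}$), hence a coarse equivalence. For (iv) $\Rightarrow$ (i), a uniform homeomorphism $f$ is surjective and uniformly continuous, hence coarse by Proposition \ref{KALON1}; the same reasoning applied to $f^{-1}$ gives $\omega_{f^{-1}}(s) < \infty$ for all $s$, and $\|f(x) - f(y)\| \leq s$ forces $\|x-y\| \leq \omega_{f^{-1}}(s)$, so $\rho_f(t) \to \infty$ as $t \to \infty$.

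The hard part will be the rounding step in (ii) $\Rightarrow$ (iii): my $\tilde{f}$ and $\tilde{g}$ are only approximate inverses, so a careful perturbation argument — exploiting that distinct net points are at least $a$ apart and hence can be rerouted locally without destroying the Lipschitz bounds — is needed to upgrade them to a genuine pair of mutual Lipschitz inverses between the nets.
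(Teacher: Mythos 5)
Your outline of (i)$\Leftrightarrow$(ii), (iii)$\Rightarrow$(i) and (iv)$\Rightarrow$(i) is sound (the paper itself does not prove this proposition but cites \cite{Ka3}, Lemma 1.4 and Proposition 1.5, so there is no in-paper argument to compare with). The problem is the step you yourself flag as ``the hard part'': in (ii)$\Rightarrow$(iii), promoting the approximate inverse pair $(\tilde f,\tilde g)$ to a genuine bi-Lipschitz \emph{bijection} between the nets is not a local perturbation argument, and the $a$-separation alone cannot deliver it. The tell is that your argument for (ii)$\Rightarrow$(iii) never uses that $X$ and $Y$ are infinite dimensional, yet the implication is \emph{false} in finite dimensions: by Burago--Kleiner there exist $(a,b)$-nets in $\mathbb{R}^2$ that are not Lipschitz isomorphic to $\mathbb{Z}^2$, even though $\mathbb{R}^2$ is trivially coarse Lipschitz equivalent to itself. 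So ``rounding the identity map'' does not show that any two nets of the same space are Lipschitz isomorphic, and no rerouting that only exploits separation can close this gap; infinite dimensionality must enter in an essential way.

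What is actually needed is the following lemma (this is essentially Kalton's Lemma 1.4): \emph{in an infinite dimensional Banach space, any two nets $N,N'$ admit a bijection $\sigma:N\to N'$ with $\sup_{x\in N}\|x-\sigma(x)\|<\infty$}, which is then automatically bi-Lipschitz by separation. Its proof uses that balls of large radius are not totally bounded, hence contain infinitely many points of any net, and then a Hall's marriage/Schr\"oder--Bernstein argument for matchings to extract the bijection. Granting this lemma, the clean route for (ii)$\Rightarrow$(iii) is not to round $f$ onto a prescribed net, but to choose one convenient net $N\subset X$ whose separation exceeds $2L^2$ (where $\rho_f(t)\geq L^{-1}t-L$), so that $f|_N$ is injective with separated, cobounded image; then $f(N)$ is itself a net in $Y$ and $f|_N:N\to f(N)$ is a bi-Lipschitz bijection, and the lemma transfers this to \emph{arbitrary} nets of $X$ and $Y$. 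You should either prove that lemma or cite it; as written, the ``combinatorial adjustment'' is precisely the content of the proposition and cannot be waved through.
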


\begin{remark} 
The terminologies above are still not completely established in the literature. For example, in \cite{Ro} coarse maps are called ``bornologous$"$, and in \cite{Ka}, the author refers to coarse maps as ``coarsely continuous$"$. As coarse maps are not continuous, and as we are interested in studying coarse maps which are also continuous, we prefer a different terminology. Also, we should mention that, in geometric group theory,  coarse Lipschitz embeddings as usually called ``quasi-isometries$"$. 
\end{remark}

\subsection{Space of positively homogeneous maps.}

Let $X$ and $Y$ be Banach spaces. We denote by $\mathcal{H}(X,Y)$ the set consisting of all maps $f:X\to Y$ which are  bounded on $B_X$ and positively homogeneous, i.e., 

$$f(\alpha x)=\alpha f(x), \ \ \text{ for all }\ \  \alpha\geq 0.$$\hfill

\noindent  We define a norm on $\mathcal{H}(X,Y)$ by setting $\|f\| =\sup\{\|f(x)\|\mid x\in B_X\}$. The space $\mathcal{H}(X,Y)$ endowed with the norm $\|\cdot\|$ above is a Banach space. Clearly, $\|f(x)\|\leq \|f\|\cdot\|x\|$, for all $x\in X$. Denote by $\mathcal{HC}(X,Y)$ the subset of $\mathcal{H}(X,Y)$ consisting of continuous maps.	

For $\eps>0$, we define $\|f\|_\eps$ as the infimum of all $L>0$ such that $L\geq \|f\|$ and

$$\|f(x)-f(y)\|\leq L\max\{\|x-y\|,\eps\|x\|,\eps\|y\|\},$$\hfill

\noindent for all $x,y\in X$. Clearly, we have 

$$\|f\|\leq \|f\|_\eps\leq \max\{1,2\eps^{-1}\}\|f\|,$$\hfill

\noindent for all $f\in \mathcal{H}(X,Y)$. 

The next proposition is a simple computation, and it can be found in \cite{Ka}, Proposition 7.3.

\begin{prop}\label{estim}
Let $X$ and $Y$ be Banach spaces, and $\varphi: \partial B_X\to Y$ be a bounded map. Let $f:X\to Y$ be given by 

\begin{align*}f(x)=\left\{\begin{array}{ll}
0, &x=0,\\
\|x\|\varphi\left(\frac{x}{\|x\|}\right), & x\neq 0.
\end{array}\right.
\end{align*}\hfill

\noindent Then $f\in \mathcal{H}(X,Y)$. If $\varphi$ is also continuous, then   $f\in\mathcal{HC}(X,Y)$. 

Moreover, let $L\geq 1$, $\eps>0$, and $K\geq 0$. If $\varphi$ is of cL-type $(L,\eps)$, and $\|\varphi(x)\|\leq K$, for all $x\in \partial B_X$, then $\|f\|_\eps\leq 2K+4L$. 
\end{prop}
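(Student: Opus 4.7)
The plan is to separate three verifications. First, that $f\in\mathcal{H}(X,Y)$: positive homogeneity is built into the formula, since for $\alpha>0$ and $x\neq 0$ one has $\|\alpha x\|=\alpha\|x\|$ and $\alpha x/\|\alpha x\|=x/\|x\|$, giving $f(\alpha x)=\alpha f(x)$; boundedness on $B_X$ follows at once from $\|f(x)\|=\|x\|\cdot\|\varphi(x/\|x\|)\|\leq \sup_{u\in\partial B_X}\|\varphi(u)\|$. Second, when $\varphi$ is continuous, the map $f$ is a composition of continuous maps on $X\setminus\{0\}$, while at the origin the estimate $\|f(x)\|\leq K\|x\|$ forces continuity there.

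The substance is in the cL-type estimate. For $x,y\in X$ with $0<\|x\|\leq\|y\|$ (the case where one vector vanishes being immediate from $\|f(x)\|\leq K\|x\|$, and the roles of $x,y$ interchangeable), I would use the decomposition
\begin{equation*}
f(x)-f(y)=\|x\|\bigl(\varphi(x/\|x\|)-\varphi(y/\|y\|)\bigr)+(\|x\|-\|y\|)\varphi(y/\|y\|),
\end{equation*}
so that the second summand is bounded by $K|\|x\|-\|y\||\leq K\|x-y\|$. For the first, the elementary inequality
\begin{equation*}
\left\|\frac{x}{\|x\|}-\frac{y}{\|y\|}\right\|\leq \frac{2\|x-y\|}{\|y\|},
\end{equation*}
obtained by inserting $\pm\,x/\|y\|$, combined with the cL-type $(L,\eps)$ hypothesis, yields
\begin{equation*}
\|x\|\cdot\|\varphi(x/\|x\|)-\varphi(y/\|y\|)\|\leq \|x\|\Bigl(L\cdot \tfrac{2\|x-y\|}{\|y\|}+\eps\Bigr)\leq 2L\|x-y\|+\eps\|x\|,
\end{equation*}
using $\|x\|\leq\|y\|$. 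Altogether $\|f(x)-f(y)\|\leq (K+2L)\|x-y\|+\eps\|x\|$.

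Setting $M=\max\{\|x-y\|,\eps\|x\|,\eps\|y\|\}$, the previous line is bounded by $(K+2L+1)M$, and the hypothesis $L\geq 1$ gives $K+2L+1\leq 2K+4L$. Combined with $\|f\|\leq K\leq 2K+4L$, this yields $\|f\|_\eps\leq 2K+4L$. There is no deep obstacle; the only care needed is bookkeeping — choosing the decomposition so that the $\|x\|$ prefactor (the smaller of the two norms) multiplies the $\varphi$-difference, which is what allows the additive $\eps$ error from cL-type to be absorbed into the $\eps\|x\|$ slot of the three-term maximum defining $\|\cdot\|_\eps$, rather than producing a term like $\eps\|y\|\cdot(\|y\|/\|x\|)$.
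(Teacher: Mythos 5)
Your proof is correct; the paper gives no proof of this proposition (it is quoted from Kalton's Proposition 7.3 as ``a simple computation''), and your argument is exactly that standard computation: the decomposition $f(x)-f(y)=\|x\|\bigl(\varphi(x/\|x\|)-\varphi(y/\|y\|)\bigr)+(\|x\|-\|y\|)\varphi(y/\|y\|)$ with the smaller norm as prefactor, plus the bound $\|x/\|x\|-y/\|y\|\|\leq 2\|x-y\|/\|y\|$. Your resulting constant $K+2L+1$ is even slightly sharper than the stated $2K+4L$, which it dominates precisely because $L\geq 1$.
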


\section{Strong embeddings into Banach spaces.}\label{sectionstrong}

In this section, we show that if $X$ uniformly embeds into a minimal Banach space $Y$, then $X$ simultaneously coarsely and uniformly embeds into $Y$. For that, we will need Lemma 16 of \cite{Ro}.

\begin{lemma}\label{christian}
  Suppose X and E are Banach spaces and $P_n : E\to E$ is a sequence of bounded projections onto subspaces $E_n\subset E$ so that $E_m \subset \text{Ker} (P_n)$, for all $m\neq n $. Assume also that, for all $n\in\N$, there exists a uniform embedding $\sigma_n : X\to E_n$. Then $X$ admits a strong embedding into E.
\end{lemma}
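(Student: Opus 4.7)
The plan is to combine the uniform embeddings $(\sigma_n)_{n\in\N}$ into a single map $f:X\to E$ of the form $f(x)=\sum_{n\in\N}\tau_n(x)$, where each $\tau_n:X\to E_n$ is a rescaled version of $\sigma_n$ tailored to produce the prescribed compression at one specific scale. Since $E_m\subset \text{Ker}(P_n)$ for $m\neq n$, the continuity of $P_n$ gives $P_n f(x)=\tau_n(x)$, and this identity is exactly what allows one to estimate the moduli of $f$ term by term: from below through a single $n$ and from above through the triangle inequality applied to the whole series.

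For the setup I would assume $\sigma_n(0)=0$ (translation changes neither $\omega_{\sigma_n}$ nor $\rho_{\sigma_n}$). Since $\sigma_n$ is a uniform embedding, the constants $c_n:=\rho_{\sigma_n}(1)>0$ are well defined, and I would introduce the amplification factor $b_n=n\|P_n\|/c_n$. Using $\lim_{s\to 0_+}\omega_{\sigma_n}(s)=0$, I choose $T_n\geq n$ large enough that
\[
\omega_{\sigma_n}(n/T_n)\leq 2^{-n}/b_n,
\]
and put $\tau_n(x)=b_n\,\sigma_n(x/T_n)$. Absolute convergence of $f(x)=\sum_n\tau_n(x)$ in $E$ then follows from the bound $\|\tau_n(x)\|\leq b_n\omega_{\sigma_n}(\|x\|/T_n)\leq 2^{-n}$ valid for every $n\geq \|x\|$.

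The verification of the four required moduli reduces to the two inequalities
\[
\rho_f(t)\geq \sup_{n}\frac{\rho_{\tau_n}(t)}{\|P_n\|}\qquad\text{and}\qquad \omega_f(t)\leq \sum_n \omega_{\tau_n}(t).
\]
Evaluating the first at $t=T_n$ gives $\rho_f(T_n)\geq b_n\rho_{\sigma_n}(1)/\|P_n\|=n$, so that $\rho_f(t)\to\infty$; taking $n=1$ in the same inequality gives $\rho_f(t)>0$ for every $t>0$, because $\sigma_1$ is a uniform embedding. The upper bound splits as $\sum_{n<t}b_n\omega_{\sigma_n}(t/T_n)$ (finitely many finite terms, by Proposition \ref{KALON1}) plus $\sum_{n\geq t}2^{-n}$, which shows $\omega_f(t)<\infty$; dominated convergence against the summable majorant $(2^{-n})$ then forces $\omega_f(t)\to 0$ as $t\to 0$. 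Altogether $f$ is both a uniform and a coarse embedding, i.e.\ a strong embedding.

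The main obstacle is balancing two conflicting demands on the rescaling. The factor $b_n$ has to grow at least like $n\|P_n\|/c_n$ in order to yield compression of order $n$ at scale $T_n$, yet this same factor multiplies $\omega_{\sigma_n}(\cdot/T_n)$ in the upper bound. What makes the construction work is that for each individual $n$ the hypothesis that $\sigma_n$ is a uniform embedding lets us take $T_n$ as large as needed to overwhelm any prescribed value of $b_n$, while the summability of $(2^{-n})$ decouples these scale-by-scale choices into a single globally convergent series in $E$.
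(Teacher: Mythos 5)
Your proof is correct, and it is essentially the same rescale-and-sum argument that the paper (which cites Rosendal's Lemma 16 for this statement) uses for the companion Lemma \ref{christian2}: the projections $P_n$ yield the term-by-term lower bound on the compression modulus, while choosing $T_n$ large enough against $b_n=n\|P_n\|/c_n$ makes the expansion moduli summable. All the estimates check out.
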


\noindent \emph{Proof  Theorem \ref{mainmain}(i).} Let $\varphi: X\to Y$ be a uniform embedding. By Gowers' dichotomy, $Y$ must contain either a hereditarily indecomposable Banach space or an unconditional basic sequence (see \cite{G}, Theorem 2). As $Y$ is minimal, and a hereditarily indecomposable Banach space is not isomorphic to any of its proper subspaces (see \cite{G}, Theorem 4), $Y$ must contain an unconditional basic sequence, say $(e_n)_n$. Let $(A_n)_n$ be a partition of $\N$ into infinite subsets, and set $E=\overline{\text{span}}\{e_j\mid j\in\N\}$ and  $E_n=\overline{\text{span}}\{e_j\mid j\in A_n\}$, for all $n\in\N$. As $Y$ is minimal, there exists a sequence of isomorphic embeddings $T_n:Y\to E_n$. So, $T_n\circ\varphi$ is a uniform embedding of $X$ into $E_n$, for all $n\in\N$.  For each $n\in\N$, let $P_n:E\to E_n$ denote the natural projection.  We can now apply Lemma \ref{christian}, so, $X$ strongly embeds into $Y$. 
\qed\\

Theorem \ref{mainmain}(i) allows us to obtain some news examples. Let $T$ denote the Tsirelson space introduced by T. Figiel and W. Johnson, and let $S$ denote the Schlumprecht space. It is well known that both $T^*$ and $S$ are minimal Banach spaces (see \cite{CS}, Theorem VI.a.1, and \cite{AnS}, 	Theorem 2.1, respectively). The following corollary is a trivial consequence of Theorem \ref{mainmain}(i).

\begin{cor}\label{cor1111}
If a Banach space $X$ uniformly embeds into  $T^*$ (resp. $S$), then $X$ strongly embeds into $T^*$ (resp.  $S$).
\end{cor}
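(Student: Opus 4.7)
The plan is to invoke Theorem \ref{mainmain}(i) directly, once the minimality hypothesis has been verified for the two target spaces. Recall that a strong embedding is precisely a map that is simultaneously a coarse and a uniform embedding, so the conclusion we want is nothing more than the conclusion of Theorem \ref{mainmain}(i) applied to $Y=T^*$ or $Y=S$.

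The only non-formal input we need is the minimality of $T^*$ and $S$. For $T^*$, this is the content of Theorem VI.a.1 of \cite{CS}: the dual Tsirelson space isomorphically embeds into each of its infinite-dimensional subspaces. For Schlumprecht's space $S$, the same property is established in Theorem 2.1 of \cite{AnS}. The excerpt has already recorded both references, so this step requires only the citation, not a new argument.

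With these facts in hand, the proof is a one-line reduction: if $X$ uniformly embeds into $T^*$ (respectively $S$), then since $T^*$ (respectively $S$) is a minimal infinite-dimensional Banach space in the sense of Section \ref{background}, Theorem \ref{mainmain}(i) applies verbatim and yields a map $X\to T^*$ (respectively $X\to S$) that is both a coarse and a uniform embedding, i.e., a strong embedding.

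There is essentially no obstacle here; the content of the corollary is entirely absorbed by Theorem \ref{mainmain}(i), and the only thing to check is that the two named spaces fall into the class to which that theorem applies. The interest of the statement lies not in the proof but in the examples produced, since it immediately gives strong embeddings into $T^*$ and $S$ whenever a uniform embedding is available, a conclusion that does not appear to follow from previously known techniques such as those used for embeddings into $\ell_p$ in \cite{Ro}.
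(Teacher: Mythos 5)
Your proposal is correct and matches the paper exactly: the paper likewise treats this corollary as an immediate consequence of Theorem \ref{mainmain}(i), with the minimality of $T^*$ and $S$ supplied by the same references (\cite{CS}, Theorem VI.a.1, and \cite{AnS}, Theorem 2.1). Nothing further is needed.
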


\noindent\emph{Proof of Theorem \ref{principal}.}
By Corollary 3.3 of \cite{AMM}, there exists a uniform embedding $f:\ell_2\to B_{\ell_2}$. Let $(e_n)_n$ be an unconditional basis for $X$. Let $(A_n)_n$ be a partition of $\N$ into infinite subsets. For each $n\in\N$, let $X_n=\overline{\text{span}}\{e_j\mid j\in A_n\}$. By Theorem 2.1 of \cite{OS}, there exists a uniform homeomorphism $\sigma_n:B_{\ell_2}\to B_{X_n}$, for each $n\in\N$. By Lemma \ref{christian}, we are done.
\qed

\begin{cor}
Let $X$ be an infinite dimensional space with an unconditional basis and finite cotype. Then $L_p$ strongly embeds into $X$, for all $p\in[1,2]$. In particular, $\ell_p$ strongly embeds into $X$, for all $p\in[1,2]$.
\end{cor}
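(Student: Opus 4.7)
The plan is to factor through $\ell_2$: first produce a strong embedding $L_p\to \ell_2$ for $p\in[1,2]$, and then compose it with the strong embedding $\ell_2\to X$ delivered by Theorem \ref{principal}. Before doing either, observe that the composition $g\circ f$ of two strong embeddings is again a strong embedding, since $\omega_{g\circ f}(t)\leq \omega_g(\omega_f(t))$ and $\rho_{g\circ f}(t)\geq \rho_g(\rho_f(t))$; thus finiteness of $\omega$ and the condition $\lim_{t\to 0_+}\omega(t)=0$ pass from $f,g$ to $g\circ f$, and similarly positivity of $\rho$ on $(0,\infty)$ and divergence $\rho(t)\to\infty$ at infinity are preserved.

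To produce the strong embedding $L_p\to\ell_2$, I would invoke the classical fact (Bretagnolle--Dacunha-Castelle--Krivine) that for $p\in(0,2]$ the kernel $(x,y)\mapsto \|x-y\|_p^p$ is (conditionally) negative definite on $L_p$. Schoenberg's theorem then yields a map $\varphi:L_p\to H$ into a real Hilbert space $H$ satisfying
\[
\|\varphi(x)-\varphi(y)\|_H=\|x-y\|_p^{p/2}\quad\text{for all } x,y\in L_p.
\]
Since $p/2\in(0,1]$, the function $t\mapsto t^{p/2}$ is finite, positive for $t>0$, tends to $0$ as $t\to 0_+$, and to $\infty$ as $t\to\infty$; hence $\omega_\varphi(t)=\rho_\varphi(t)=t^{p/2}$ and $\varphi$ is simultaneously a uniform and a coarse embedding, i.e., a strong embedding. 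Separability of $L_p$ lets us take $H\equiv\ell_2$.

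Composing $\varphi$ with the strong embedding $\ell_2\to X$ given by Theorem \ref{principal} produces a strong embedding $L_p\to X$, establishing the first statement. The ``in particular'' clause is immediate: $\ell_p$ is linearly isometric to a subspace of $L_p$ (take the closed span of any disjointly supported normalized sequence of $L_p$ functions), and the restriction of a strong embedding to a subspace is a strong embedding.

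Given that Theorem \ref{principal} is doing the heavy lifting, there is no real obstacle here; the only nontrivial ingredient beyond what the paper already provides is the standard Schoenberg-type Hilbert embedding of $L_p$, and the remaining work is routine bookkeeping with the expansion and compression moduli.
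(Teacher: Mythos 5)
Your proposal is correct and follows exactly the paper's route: the paper also deduces the corollary by composing the strong embedding $L_p\to L_2\equiv\ell_2$ (which it simply cites from Mendel--Naor, Remark 5.10 of \cite{MN2004}, rather than rederiving via Schoenberg as you do) with the strong embedding $\ell_2\to X$ from Theorem \ref{principal}. Your additional verifications --- that strong embeddings compose and restrict to subspaces, and that $\|x-y\|_p^{p/2}$ is a Hilbertian metric --- are accurate and merely make explicit what the paper leaves implicit.
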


\begin{proof}
This is a simple consequence of the fact that $L_p$ strongly embeds into $L_2\equiv \ell_2$, for all $p\in[1,2]$ (see Remark 5.10 of \cite{MN2004}).
\end{proof}

We finish this section with the following natural question.

\begin{problem}
Does $\ell_2$ strongly embed into any infinite dimensional Banach space?
\end{problem}

\section{Approximating coarse maps by continuous coarse maps.}\label{sectioncontcoarproj}

In this section, we study when a coarse map can be assumed to be also continuous. Our goal is to prove a general theorem (Theorem \ref{Thmcontcoarproj}) and then use it to obtain applications to the Banach space setting. Precisely, we end this section showing that the existence of a coarse retraction $X\to Y$, where $X$ and $Y$ and Banach spaces and $Y\subset X$, implies the existence of a continuous coarse retraction $X\to Y$ (Corollary \ref{contcoarproj}). In Section \ref{sectioncoarsecont}, we use Theorem \ref{Thmcontcoarproj} in order to show that if a Banach space $X$ coarsely embeds into a minimal Banach space $Y$, then $X$ simultaneously coarsely and homeomorphically embeds into $Y$ by a map with uniformly continuous inverse (Theorem \ref{mainmain}(ii)). Finally,  in Section \ref{sectioncoarsehomeo}, we  use Theorem \ref{Thmcontcoarproj} to prove that the existence of a coarse section for a quotient map implies the existence of a continuous coarse section.\\

J. Dugundji proved (see \cite{Du}, Theorem 4.1) the following: let $M$ be a metric space, $A\subset M$ be a closed subspace, $E$ be a normed space (or, more generally, a locally convex topological vector space), and $f:A\to E$ be a continuous map, then $f$ can be extended to a continuous map $\varphi:M\to E$.  However,  J. Dugundji was only interested in continuous maps and did not care about having any control over the value of $\|\varphi(x)-\varphi(a)\|$, for $x\in M$, and $a\in A$. Proposition \ref{prop50} below is the modification of Theorem 4.1 of \cite{Du} that we will need for our settings.

\begin{lemma}\label{lema50}
Let $M$ be a metric space, $A\subset M$ be a closed subspace, and $\alpha>0$. There exists a locally finite open cover $\mathcal{U}$ of $M
\setminus A$ such that

\begin{enumerate}[(i)]
\item $\text{diam}(U)<\alpha$, for all $U\in \mathcal{U}$, and
\item for all $a\in A$, and all neighborhood $V$ of $a$, there exists a neighborhood $V'\subset V$ of $a$ such that, for all $U\in \mathcal{U}$, $U\cap V'\neq \emptyset$ implies $U\subset V$. 
\end{enumerate} 
\end{lemma}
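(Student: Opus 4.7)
The plan is to follow the classical Dugundji-style construction of a ``cushioned'' cover of the open set $M\setminus A$. First I would associate to each point $x\in M\setminus A$ the open ball $B_x := B(x,r_x)$ of radius $r_x := \tfrac{1}{4}\min\{d(x,A),\alpha\}$. Since $r_x>0$ for every $x\in M\setminus A$, the family $\{B_x\}_{x\in M\setminus A}$ is an open cover of $M\setminus A$ whose members shrink as $x$ approaches $A$. Because $M\setminus A$ is metrizable (as a subspace of $M$), it is paracompact, so this cover admits a locally finite open refinement $\mathcal{U}$, and this $\mathcal{U}$ will be my candidate.

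Condition (i) is immediate: every $U\in\mathcal{U}$ is contained in some $B_{x_U}$, whence $\mathrm{diam}(U)\le 2r_{x_U}\le \alpha/2<\alpha$.

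For condition (ii), given $a\in A$ and a neighborhood $V$ of $a$, choose $\varepsilon>0$ with $B(a,\varepsilon)\subset V$ and set $V' := B(a,\varepsilon/3)$. Suppose $U\in\mathcal{U}$ is contained in some $B_x$ and meets $V'$ at a point $y$. Then $d(y,a)<\varepsilon/3$ and $d(y,x)<r_x\le d(x,A)/4\le d(x,a)/4$. The triangle inequality yields $d(x,a)\le d(x,y)+d(y,a) < d(x,a)/4 + \varepsilon/3$, hence $d(x,a)<4\varepsilon/9$. For any $z\in U\subset B_x$ we then get $d(z,a)\le d(z,x)+d(x,a) < r_x + d(x,a) \le \tfrac{5}{4}d(x,a) < 5\varepsilon/9 < \varepsilon$, so $U\subset B(a,\varepsilon)\subset V$, which is exactly (ii).

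The only real subtlety is juggling the constants so that a single choice of $r_x$ simultaneously enforces the diameter bound in (i) and makes the triangle-inequality chain in (ii) close up; any radius of the form $c\cdot\min\{d(x,A),\alpha\}$ with $c$ sufficiently small will do, and the value $c=1/4$ above is comfortable. A minor point worth flagging is that paracompactness is invoked for $M\setminus A$ rather than for $M$ itself, but this matches the statement, which asks only for a cover of $M\setminus A$ that is locally finite in $M\setminus A$.
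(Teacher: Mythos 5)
Your proof is correct and is essentially the standard Dugundji construction: the paper gives no proof of this lemma, simply citing Lemma 2.1 of \cite{Du} (and remarking that the diameter bound (i) is implicit in Dugundji's argument), and your balls $B(x,\tfrac14\min\{d(x,A),\alpha\})$ refined by paracompactness of $M\setminus A$ reproduce exactly that argument. The constant-chasing in (ii) checks out, and you are right that local finiteness is only needed, and only claimed, relative to $M\setminus A$.
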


The lemma above is Lemma 2.1 of \cite{Du}. Although, item (i) above does not explicitly appear in Lemma 2.1 of \cite{Du}, it is clear from its proof that the diameters of the elements of $\mathcal{U}$ can be taken to be arbitrarily small.

\begin{prop}\label{prop50}
Let $(M,d)$ be a metric space, and $A\subset M$ be a closed subspace. Let $E$ be a normed space, and let $f:A\to E$ be a continuous coarse map. Then, for all $\lambda>1$, and all $\gamma>0$, there exists a continuous  map $\varphi:M\to \text{conv}(f(A))$ extending $f$ such that

$$\|\varphi(x)-\varphi(a)\|\leq \omega_f(\lambda\cdot d(x,A)+d(x,a)+\gamma),$$\hfill

\noindent for all $x\in M$, and all $a\in A$.
\end{prop}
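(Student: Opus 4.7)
The plan is to adapt Dugundji's classical extension construction (Theorem 4.1 of \cite{Du}) with a more careful choice of auxiliary points so that the extension $\varphi$ satisfies the required quantitative bound. The main input is Lemma \ref{lema50} applied with $\alpha=\gamma$, which produces a locally finite open cover $\mathcal{U}$ of $M\setminus A$ whose members have diameter at most $\gamma$ and which is ``metrically fine near $A$'' via condition (ii). Since $M\setminus A$ is paracompact (being metrizable), I would fix a continuous partition of unity $(p_U)_{U\in\mathcal{U}}$ subordinate to $\mathcal{U}$.

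The key quantitative choice is: for each $U\in\mathcal{U}$, note that $d(U,A)>0$ (this is automatic from Dugundji's construction of $\mathcal{U}$), so since $\lambda>1$ I can pick $u_U\in U$ and $a_U\in A$ with $d(u_U,a_U)\leq \lambda\cdot d(U,A)$. Then set
\[
\varphi(x)=\begin{cases} f(x), & x\in A,\\ \sum_{U\in\mathcal{U}} p_U(x)\,f(a_U), & x\in M\setminus A.\end{cases}
\]
By construction, $\varphi(M)\subset\mathrm{conv}(f(A))$ and $\varphi$ extends $f$. For the estimate, I would fix $x\in M\setminus A$ and $a\in A$, note that any $U$ with $p_U(x)>0$ contains $x$, and use $\mathrm{diam}(U)\leq\gamma$ together with $d(U,A)\leq d(x,A)$ to get
\[
d(a_U,a)\leq d(a_U,u_U)+d(u_U,x)+d(x,a)\leq \lambda\, d(x,A)+\gamma+d(x,a).
\]
Then $\|f(a_U)-f(a)\|\leq \omega_f\!\bigl(\lambda\, d(x,A)+d(x,a)+\gamma\bigr)$, and since $(p_U(x))_U$ is a convex combination, the same bound passes to $\|\varphi(x)-\varphi(a)\|$; the case $x\in A$ follows immediately from monotonicity of $\omega_f$.

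The step I expect to be the main obstacle is verifying continuity at points of $A$ (continuity on $M\setminus A$ is automatic from local finiteness and the continuity of the $p_U$). At a point $a\in A$, given $\varepsilon>0$, I would use continuity of $f$ at $a$ to choose $\delta>0$ such that $a'\in A$, $d(a',a)<\delta$ imply $\|f(a')-f(a)\|<\varepsilon$, then apply property (ii) of Lemma \ref{lema50} to the neighbourhood $V=B(a,\delta/(\lambda+1))$ to obtain a smaller neighbourhood $V'$ with the property that every $U\in\mathcal{U}$ meeting $V'$ is contained in $V$. For such $U$ one has $d(U,A)\leq \delta/(\lambda+1)$, and hence $d(a_U,a)\leq d(a_U,u_U)+d(u_U,a)\leq \lambda\, d(U,A)+\delta/(\lambda+1)\leq \delta$, which forces $\|f(a_U)-f(a)\|<\varepsilon$. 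Averaging against $(p_U(x))_U$ gives $\|\varphi(x)-f(a)\|<\varepsilon$ for all $x\in V'\setminus A$; the case $x\in V'\cap A$ is handled directly by the choice of $\delta$. This yields continuity of $\varphi$ at $a$ and completes the construction.
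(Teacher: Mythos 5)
Your construction is correct and is essentially the paper's own proof: both run Dugundji's extension scheme through Lemma \ref{lema50}, define $\varphi$ off $A$ by averaging values $f(a_U)$ against a partition of unity subordinate to a fine locally finite cover of $M\setminus A$, where $a_U\in A$ nearly realizes the distance from $U$ to $A$, and both verify continuity at points of $A$ via property (ii) exactly as you describe. The only deviation is cosmetic: you anchor via the set distance $d(U,A)$ (which lets you take $\alpha=\gamma$, using $d(U,A)\leq d(x,A)$ for $x\in U$), whereas the paper picks a point $x_U\in U$ and $a_U$ with $d(x_U,a_U)\leq\lambda\, d(x_U,A)$ and compensates with $\alpha=\gamma/(1+\lambda)$ --- note that your claim $d(U,A)>0$ follows from Dugundji's construction but not from the statement of Lemma \ref{lema50} alone, which is precisely what the paper's pointwise choice sidesteps.
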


\begin{proof} 
Without loss of generality, 	assume $\lambda<2$. Let  $\mathcal{U}=\{U_j\}_{j\in J}$ be a locally finite open cover for the metric space $M\setminus A$ given by Lemma \ref{lema50} for $\alpha=\gamma/(1+\lambda)$. For each $j\in J$, pick $x_j\in U_j$, and $a_j\in A$ such that $d(x_j,a_j)\leq \lambda\cdot d(x_j,A)$.  For each $j\in J$, let $\psi_j(x)=d(x,U^c_j)$, for all $x\in M$. 

Let $\Psi=\sum_{j\in J}\psi_{j}$, and define $\varphi: M\to  \text{conv}(f(A))$ by

$$\varphi(x)=\left\{\begin{array}{l}
f(x), \ \ \text{ if }\ \ x\in A,\\
\sum_{j\in J}  \frac{\psi_j(x)}{\Psi(x)}f(a_j), \ \ \text{ if }\ \  x\not\in A.
\end{array}\right.$$\hfill

\noindent Clearly, $\varphi$ extends $f$, and, as $\mathcal{U}$ is locally finite, $\varphi$ is continuous on $M\setminus A$. Let us observe that $\varphi$ is also continuous on $A$. Pick $a\in A$, and let $\eps>0$. By the continuity of $f$, there exists $\delta>0$ such that $d(a,a')<\delta$ implies $\|f(a)-f(a')\|< \eps$, for all $a'\in A$. Pick $\delta'\in(0,\delta/6)$ such that, for all $j\in J$, $U_j\cap B(a,\delta')\neq\emptyset$ implies $U_j\subset B(a,\delta/6)$. Say $x\in B(a,\delta')\setminus A$, so $x$ belongs to only  finitely many elements of $\mathcal{U}$, say $U_{i_1},\ldots ,U_{i_k}$. By our choice of $\delta'$,  $d(x,x_{i_j})< \delta/3$ and $d(a_{i_j},x_{i_j})<\lambda\delta/6< \delta/3$, for all $j\in\{1,\ldots ,k\}$. Hence,

\begin{align*}d(a_{i_j},a)\leq d(a_{i_j},x_{i_j})+d(x_{i_j},x)+d(x,a)<
&\frac{\delta}{3}+\frac{\delta}{3}+\delta'<\delta, 
\end{align*}\hfill

\noindent for all $j\in \{1,\ldots ,n\}$. By our choice of $\delta$, this gives us that

\begin{align*}
\|\varphi(x)-\varphi(a)\|&\leq \sum_{j=1}^k\frac{\psi_{i_j}(x)}{\Psi(x)}\cdot \|f(a_{i_j})-f(a)\|<\eps.
\end{align*}\hfill

\noindent So $\varphi$ is continuous.

 Let $x\in M$, and $a\in A$. If $x\in A$, it follows that $\|\varphi(x)-\varphi(a)\|\leq \omega_f(d(x,a))$, so assume $x\not\in A$. Let  $U_{i_1},\ldots ,U_{i_k}$ be the only elements of $\mathcal{U}$ containing $x$. As $\text{diam}(U_{j})<\gamma/(1+\lambda)$, for all $j\in J$, it follows that $d(x_{i_j},x)<\gamma/(1+\lambda)$, for all $j\in \{1,\ldots ,k\}$. Hence, we must have

\begingroup
\allowdisplaybreaks
\begin{align*}
\|\varphi(x)-\varphi(a)\|&\leq \sum_{j=1}^k\frac{\psi_{i_j}(x)}{\Psi(x)}\cdot \|f(a_{i_j})-f(a)\|\\
&\leq \sum_{j=1}^k\frac{\psi_{i_j}(x)}{\Psi(x)}\cdot \omega_f\big(d(a_{i_j},x_{i_j})+d(x_{i_j},x)+d(x,a)\big)\\
&\leq \sum_{j=1}^k\frac{\psi_{i_j}(x)}{\Psi(x)}\cdot \omega_f\big(\lambda\cdot d(x,A)+(1+\lambda)\cdot d(x_{i_j},x)+d(x,a)\big)\\
&\leq   \omega_f(\lambda\cdot d(x,A)+\gamma +d(x,a)),
\end{align*}\hfill
\endgroup

\noindent  and we are done.
\end{proof}

We can now prove the main theorem of this section.

\begin{proof}[Proof of Theorem \ref{Thmcontcoarproj}]
Let $\theta:M\to E$ be the continuous extension of $\varphi_{|A}$  given by  Proposition \ref{prop50} for $\lambda=2$, and some $\gamma>0$ such that 

$$\omega_\varphi(\gamma)+\omega_{\varphi_{|A}}(4\gamma)\leq \inf_{s>0}\omega_\varphi(s)+\inf_{s>0}\omega_{\varphi_{|A}}(s)+\delta.$$\hfill

 Let $U=\{x\in M\mid d(x,A)< \gamma\}$, and let $\mathcal{U}=\{U_j\}_{j\in J}$ be an open cover for the metric space $M\setminus A$ such that $\text{diam}(U_j)<\gamma$, for all $j\in J$. So, $\mathcal{U}'=\{U,U_j\}_{j\in J}$ is an open cover for $M$, and, as $M$ is paracompact, $\mathcal{U}'$ has a locally finite refinement (see \cite{M}, Theorem 41.4). Hence, there exists a family of open sets $\mathcal{V}=\{V_i\}_{i\in I}$ refining $\mathcal{U}$ such that $\{U,V_i\}_{i\in I}$ is a locally finite open cover of $M$.  For each $i\in I$, pick $x_i\in V_i$,  let $\psi_i(x)=d(x_i,V^c_i)$,  and let $\psi_U(x)=\max\{0,1-d(x,A)/\gamma\}$, for all $x\in M$. So $\psi_U(x)=1$, if $x\in A$, and $\psi_U(x)=0$, if $x\not\in U$.

Let $\Psi=\psi_U+\sum_{i\in I}\psi_i$, and define $\Phi: M\to  \text{conv}(\varphi(M))$ by

$$\Phi(x)=\frac{\psi_U(x)}{\Psi(x)}\theta(x)+\sum_{i\in I}  \frac{\psi_i(x)}{\Psi(x)}\varphi(x_i).
$$\hfill

\noindent As $\{U,V_i\}_{i\in I}$ is  locally finite, $\Phi$ is continuous. Also, as $\psi_i(x)=0$, for all $x\in A$, and all $i\in I$, it is clear that $\Phi_{|A}=\varphi_{|A}$. 

Let $x\in M\setminus A$, and let $V_{i_1},\ldots ,V_{i_k}$ be the only elements of $\mathcal{V}$ containing $x$.  As $\text{diam}(V_i)< \gamma$, for all $i\in I$, we have that $d(x,x_{i_j})<\gamma$, for all $j\in\{1,\ldots ,k\}$. Hence, 

\begin{align*}
\|\varphi(x)-\Phi(x)\|&\leq \frac{\psi_U(x)}{\Psi(x)}\cdot \|\varphi(x)-\theta(x)\|+\sum_{j=1}^k\frac{\psi_{i_j}(x)}{\Psi(x)}\cdot\|\varphi(x)-\varphi(x_{i_j})\|\\
&\leq \frac{\psi_U(x)}{\Psi(x)}\cdot \|\varphi(x)-\theta(x)\|+ \omega_\varphi(\gamma)\cdot \sum_{j=1}^k\frac{\psi_{i_j}(x)}{\Psi(x)}.
\end{align*}

\noindent If $x\not\in U$, this shows that $\|\varphi(x)-\Phi(x)\|\leq  \omega_\varphi(\gamma)$. If $x\in U$, pick $a\in A$ such that $d(x,a)<\gamma$. Then, as $\theta(a)=\varphi(a)$, we have that

\begin{align*}
\|\varphi(x)-\theta(x)\|&\leq \|\varphi(x)-\varphi(a)\|+\|\theta(a)-\theta(x)\|\\
&\leq \omega_\varphi(\gamma)+\omega_{\varphi_{|A}}(\lambda\cdot d(x,A)+d(x,a)+\gamma)\\
&\leq \omega_\varphi(\gamma)+\omega_{\varphi_{|A}}(4\gamma).
\end{align*}\hfill

\noindent So, we are done.
\end{proof}

\begin{cor}\label{contcoarproj2}
Let $Y$ be a Banach space and $A\subset Y$ be a closed subset. Let  $\varphi:Y\to A$ be a  retraction. Then, for all $\delta>0$, there exists a continuous retraction $\Phi:Y\to \text{conv} (A)$ such that

$$\sup_{x\in Y}\|\varphi(x)-\Phi(x)\|\leq \inf_{s>0}\omega_\varphi(s)+\delta.$$\hfill

\noindent In particular, if $\varphi$ is coarse, so is $\Phi$.
\end{cor}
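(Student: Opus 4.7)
The plan is to apply Theorem \ref{Thmcontcoarproj} directly with $M = Y$, $E = Y$ (viewed as a normed space), and the distinguished closed subset being $A \subset Y$. The key observation is that a retraction $\varphi : Y \to A$, by definition, satisfies $\varphi_{|A} = \mathrm{id}_A$, which is in particular continuous. Thus the hypothesis of Theorem \ref{Thmcontcoarproj} is met. Moreover, since $\varphi$ is surjective onto $A$, we have $\varphi(Y) = A$, hence $\mathrm{conv}(\varphi(Y)) = \mathrm{conv}(A)$; this identifies the codomain claimed in the corollary.

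Applying Theorem \ref{Thmcontcoarproj} for the given $\delta > 0$, we obtain a continuous map $\Phi : Y \to \mathrm{conv}(A)$ with $\Phi_{|A} = \varphi_{|A} = \mathrm{id}_A$, which makes $\Phi$ a retraction in the same sense as $\varphi$, and
$$ \sup_{x \in Y} \|\varphi(x) - \Phi(x)\| \;\leq\; \inf_{s>0}\omega_\varphi(s) + \inf_{s>0}\omega_{\varphi_{|A}}(s) + \delta. $$
The crucial simplification now comes from the fact that $\varphi_{|A} = \mathrm{id}_A$, so $\omega_{\varphi_{|A}}(s) \leq s$ for all $s > 0$, and consequently $\inf_{s > 0}\omega_{\varphi_{|A}}(s) = 0$. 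This immediately yields the estimate in the statement.

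For the final sentence, the coarseness of $\Phi$ follows from a standard perturbation remark: if $\varphi$ is coarse, then $\inf_{s>0}\omega_\varphi(s) < \infty$, so $\Phi - \varphi$ is a bounded map, and a bounded perturbation of a coarse map is coarse (since $\omega_{\Phi}(t) \leq \omega_\varphi(t) + 2\sup_{x}\|\Phi(x)-\varphi(x)\|$).

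I do not foresee a real obstacle here; the corollary is essentially a renaming of Theorem \ref{Thmcontcoarproj} specialized to the retraction case, and the only point one needs to notice is that the term $\inf_{s>0}\omega_{\varphi_{|A}}(s)$ vanishes automatically because the restriction $\varphi_{|A}$ is the identity on $A$.
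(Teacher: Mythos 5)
Your proposal is correct and follows exactly the paper's own argument: apply Theorem \ref{Thmcontcoarproj} with the closed set $A$, observe that $\varphi_{|A}=\mathrm{id}_A$ forces $\inf_{s>0}\omega_{\varphi_{|A}}(s)=0$, and note that $\Phi_{|A}=\mathrm{id}_A$ makes $\Phi$ a retraction into $\mathrm{conv}(A)=\mathrm{conv}(\varphi(Y))$. No further comment is needed.
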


\begin{proof}
As $\varphi_{|A}=\text{Id}_{|A}$, we have that $\omega_{\varphi_{|X}}(t)=t$, for all $t$. A straightforward  application of Theorem \ref{Thmcontcoarproj} finishes the proof. 
\end{proof}

\noindent \emph{Proof of Corollary \ref{contcoarproj}.} This is a particular case of Corollary \ref{contcoarproj2} above.\qed\\

In the case where $A=\emptyset$, the $\Phi$ given by Theorem \ref{Thmcontcoarproj} is not only continuous, but even locally Lipschitz. Let $(M,d)$ and $(N,\partial)$ be metric spaces. We call a map $f: M\to N$ \emph{locally Lipschitz} if for each $x\in M$, there exists a neighborhood of $x$ in which $f$ is Lipschitz. 
 
\begin{prop}\label{coarsecont}
Let $(M,d)$ be a metric space, and let $E$ be a normed space. Let $\varphi:M\to E$ be a  map. Then, for all $\delta>0$, there exists a locally Lipschitz map $\Phi:M\to \text{conv}(\varphi(M))$ such that

$$\sup_{x\in M} \|\varphi(x)-\Phi(x)\|\leq \inf_{s>0}\omega_\varphi(s)+\delta.$$\hfill

\noindent In particular, if $M$ coarsely embeds into $E$, then $M$   coarsely embeds into $E$ by a locally Lipschitz map.
\end{prop}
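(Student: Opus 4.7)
The plan is to recycle the construction of Theorem \ref{Thmcontcoarproj} in the degenerate case $A = \emptyset$: then the correcting term $\theta$ disappears together with the auxiliary chart $U$, and the partition-of-unity map itself does the job. The key observation is that once we build $\Phi$ from functions of the form $\psi_i(x) = d(x, V_i^c)$, the local finiteness of the cover will upgrade continuity to local Lipschitz-ness for free.

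First, I would fix $\delta > 0$ and choose $\gamma > 0$ with $\omega_\varphi(\gamma) \leq \inf_{s>0}\omega_\varphi(s) + \delta$. Since every metric space is paracompact (\cite{M}, Theorem 41.4), the open cover $\{B(x,\gamma/2) : x \in M\}$ admits a locally finite open refinement $\mathcal{V} = \{V_i\}_{i \in I}$, and each $V_i$ has diameter at most $\gamma$. Pick $x_i \in V_i$ for each $i$, and set $\psi_i(x) = d(x,V_i^c)$, $\Psi(x) = \sum_{i\in I}\psi_i(x)$ (a locally finite sum), and
$$\Phi(x) = \sum_{i \in I} \frac{\psi_i(x)}{\Psi(x)} \varphi(x_i).$$
Since $\mathcal{V}$ covers $M$, some $\psi_i(x) > 0$, so $\Psi(x) > 0$ and $\Phi(x)$ is a well-defined convex combination of elements of $\varphi(M)$.

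The estimate is immediate: if $V_{i_1},\ldots,V_{i_k}$ are the members of $\mathcal{V}$ containing a given $x$, then $d(x,x_{i_j}) \leq \gamma$, so
$$\|\varphi(x) - \Phi(x)\| \leq \sum_{j=1}^k \frac{\psi_{i_j}(x)}{\Psi(x)} \|\varphi(x) - \varphi(x_{i_j})\| \leq \omega_\varphi(\gamma) \leq \inf_{s>0}\omega_\varphi(s) + \delta.$$
For local Lipschitz-ness, fix $x \in M$. By local finiteness, there is a neighborhood $W$ of $x$ meeting only finitely many $V_i$'s, say $V_{i_1},\ldots,V_{i_m}$; on $W$, $\Psi$ is a finite sum of $1$-Lipschitz functions, hence $1$-Lipschitz. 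Moreover, $\Psi(x) > 0$, and $\Psi$ being $1$-Lipschitz means it is bounded below by $\Psi(x)/2$ on a smaller neighborhood $W' \subset W$ of $x$. On $W'$ each coefficient $\psi_{i_j}/\Psi$ is therefore Lipschitz, the sum defining $\Phi$ has only $m$ terms, and $\Phi$ is a finite linear combination of Lipschitz scalar functions with fixed vector coefficients $\varphi(x_{i_j})$. So $\Phi$ is Lipschitz on $W'$.

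For the final statement: if $\varphi$ is a coarse embedding then $\omega_\varphi(s) < \infty$ for all $s$, hence $\inf_{s>0}\omega_\varphi(s) < \infty$, and the $\Phi$ produced above lies at finite sup distance from $\varphi$; since a bounded perturbation of a coarse embedding is a coarse embedding, $\Phi$ is the desired locally Lipschitz coarse embedding. The only place where care is really needed is Step~4 — establishing the lower bound on $\Psi$ near a point so that $1/\Psi$ is Lipschitz there — but the $1$-Lipschitz-ness of each $\psi_i$ makes this a short computation rather than a genuine obstacle.
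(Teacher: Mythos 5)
Your proposal is correct and follows essentially the same route as the paper: the partition-of-unity construction of Theorem \ref{Thmcontcoarproj} with $A=\emptyset$, the same sup-distance estimate via $\text{diam}(V_i)\leq\gamma$, and local Lipschitz-ness obtained by bounding $\Psi$ below on a neighborhood of each point (the paper does this by arranging $B(x,2\eps)\subset V_{i_1}$ rather than by continuity of $\Psi$, but the idea is the same). The only slip is cosmetic: a finite sum of $m$ many $1$-Lipschitz functions is $m$-Lipschitz, not $1$-Lipschitz, which does not affect the argument.
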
		

\begin{proof}
Let $\gamma$, $\mathcal{V}=\{V_i\}_{i\in I}$, $\{x_i\}_{i\in I}$, $(\psi_i)_{i\in\N}$, $\Psi$ and $\Phi$ be as in  the proof of Theorem \ref{Thmcontcoarproj} (with $A=\emptyset$,  and $U=\emptyset$). We only need to notice that $\Phi$ is locally Lipschitz. Let $x\in M$. Then, there exists $\eps>0$ such that $B(x,\eps)$ intersects only finitely many elements of $\mathcal{V}$, say $V_{i_1},\ldots ,V_{i_k}$. Without loss of generality, we can assume that $x\in V_{i_1}$, and that $B(x,2\eps)\subset V_{i_1}$. So $\Psi(y)\geq\eps$, for all $y\in B(x,\eps)$. Therefore, as $\psi_i(y)/\Psi(y)\leq 1$, for all $y\in M$, and all $i\in I$, we have that

\begin{align*}
\Big|\frac{\psi_i(z)}{\Psi(z)}-\frac{\psi_i(y)}{\Psi(y)}\Big|&\leq \frac{|\psi_i(z)-\psi_i(y)|}{\Psi(z)}+\frac{|\Psi(z)-\Psi(y)|}{\Psi(z)}\cdot \frac{\psi_i(y)}{\Psi(y)}\\
&\leq \Big(\frac{1+k}{\eps}\Big)d(z,y),
\end{align*}\hfill

\noindent for all $z,y\in B(x,\eps)$. Hence, letting $L= \max\{\|\varphi(x_{i_l})\|\mid 1\leq l\leq k\}$, we have

$$\|\Phi(z)-\Phi(y)\|\leq L\Big(\frac{k+k^2}{\eps}\Big)d(z,y),$$\hfill

\noindent for all $z,y\in B(x,\eps)$. 
\end{proof}

We had just shown that if $(M,d)$ coarsely embeds into a Banach space $E$, then it coarsely embeds by a continuous map. We would like to obtain that the existence of coarse embeddings actually guarantee us the existence of simultaneously coarse and homeomorphic embeddings. In the next proposition, we show that injectivity of the embedding is not a problem.

\begin{prop}
Let $(M,d)$ be a separable metric space and let $E$ be an infinite dimensional Banach space. Let  $\varphi:M\to E$ be a  map. Then, for all $\delta>0$, there exists an injective continuous  map $\Phi:M\to E$ such that

$$\sup_{x\in M}\|\varphi(x)-\Phi(x)\|\leq \inf_{s>0}\omega_\varphi(s)+\delta.$$\hfill

\noindent In particular, if a separable Banach space $X$ coarsely embeds into a Banach space $Y$, then $X$ coarsely embeds into $Y$ by an injective continuous map.
\end{prop}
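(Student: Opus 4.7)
The plan is to first approximate $\varphi$ by a continuous map using Proposition~\ref{coarsecont}, and then perturb that approximation by a small continuous injection into $E$, chosen so that the sum remains injective.

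Applying Proposition~\ref{coarsecont} with $\delta/2$ in place of $\delta$ yields a continuous (in fact locally Lipschitz) map $\Phi_0:M\to E$ with $\sup_{x\in M}\|\varphi(x)-\Phi_0(x)\| \leq \inf_{s>0}\omega_\varphi(s)+\delta/2$. Next, fix a countable dense subset $\{z_n\}_{n\in\N}$ of $M$ (using separability), and use Mazur's basic sequence theorem (available since $E$ is infinite dimensional) to fix a normalized basic sequence $(e_n)_{n\in\N}$ in $E$. For scalars $\lambda_n>0$ with $\sum_n\lambda_n\leq \delta/2$, define
\[
\iota(x) := \sum_{n=1}^\infty \lambda_n\,\frac{d(x,z_n)}{1+d(x,z_n)}\,e_n.
\]
The series converges absolutely and uniformly, so $\iota$ is continuous with $\|\iota(x)\|\leq \delta/2$ for every $x\in M$. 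Moreover $\iota$ is injective: if $\iota(x)=\iota(y)$, the uniqueness of basic sequence coefficients forces $d(x,z_n)=d(y,z_n)$ for every $n$, and density of $\{z_n\}$ then yields $x=y$.

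Setting $\Phi:=\Phi_0+\iota$ produces a continuous map with $\sup_x\|\varphi(x)-\Phi(x)\|\leq \inf_{s>0}\omega_\varphi(s)+\delta$, so the only remaining task is to verify injectivity of $\Phi$. Let $V:=\overline{\text{span}}(\Phi_0(M))$, a separable closed subspace of $E$, and $W:=\overline{\text{span}}(e_n)$. The key observation is that if the basic sequence $(e_n)$ can be chosen so that $W\cap V=\{0\}$, then $\Phi(x)=\Phi(y)$ forces $\Phi_0(x)-\Phi_0(y)=\iota(y)-\iota(x)\in V\cap W=\{0\}$, hence $\iota(x)=\iota(y)$, which by injectivity of $\iota$ gives $x=y$. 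When the quotient $E/V$ is infinite dimensional, such a transverse basic sequence is obtained by lifting a basic sequence of $E/V$ through the quotient map $\pi:E\to E/V$ (passing to a subsequence via Bessaga--Pelczynski to guarantee the lifts are basic in $E$ with bounded norms); the basic sequence property in $E/V$ then ensures that any $v\in W\cap V$ has $\pi(v)=0$ with all its basic sequence coefficients vanishing, so $v=0$.

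The main obstacle I anticipate is the edge case where $V$ has finite codimension in $E$ (which can only occur when $E$ is separable and $\Phi_0(M)$ is very spread out), since no infinite dimensional $W$ can then be transverse to $V$. To handle it, I would instead use a Markushevich basis $(e_n,e_n^*)$ of $E$ in the definition of $\iota$, and observe that by totality of $(e_n^*)$, the condition $\Phi(x)=\Phi(y)$ becomes equivalent to $G_n(x)=G_n(y)$ for every $n$, where $G_n(x):=e_n^*(\Phi_0(x))+\lambda_n\,d(x,z_n)/(1+d(x,z_n))$; a sufficiently generic choice of the $\lambda_n$ should then make these real valued coordinate functions separate points of $M$, exploiting that the distance functions $d(\cdot,z_n)/(1+d(\cdot,z_n))$ already do so.
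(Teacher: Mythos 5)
Your construction works only under the transversality hypothesis $E/V$ infinite dimensional, where $V=\overline{\text{span}}(\Phi_0(M))$, and the edge case you flag is a genuine gap, not a removable technicality: when $E$ is separable and $\Phi_0(M)$ spans a dense subspace (which certainly happens, e.g.\ for a coarse embedding of a separable $X$ into a separable $Y$), there is no infinite dimensional $W$ with $W\cap V=\{0\}$, and your proposed repair via a Markushevich basis and a ``sufficiently generic choice of the $\lambda_n$'' is not an argument. The required genericity would have to defeat \emph{uncountably} many pairs $x\neq y$ simultaneously, and you give no mechanism (a countable union of bad parameter sets can be avoided; an uncountable one cannot, absent extra structure). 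A smaller but real hiccup in the main case: lifts $f_n$ of a basic sequence $(g_n)\subset E/V$ need not admit a basic subsequence (they can converge weakly to a nonzero limit, and Bessaga--Pe{\l}czy\'nski requires a seminormalized weakly null sequence), so $W\cap V=\{0\}$ is not secured as stated. That part is fixable without any basic-sequence extraction in $E$: if $\Phi(x)=\Phi(y)$ then $\iota(x)-\iota(y)\in V$, so applying the quotient map $\pi$ and using that $(\pi(f_n))=(g_n)$ is basic in $E/V$ already forces $d(x,z_n)=d(y,z_n)$ for all $n$, hence $x=y$.

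The paper's proof avoids the codimension problem by reversing the roles of the two summands, and you may want to compare. It fixes \emph{first} a closed infinite dimensional subspace $Z\subset E$ with $E/Z$ infinite dimensional, and puts the \emph{injective} part there: since $M$ is separable it embeds isometrically into $C[0,1]$, which is homeomorphic to $B_Z$, giving a homeomorphic embedding $\theta:M\to\gamma B_Z$. The \emph{approximating} part is then a partition-of-unity sum $\sum_n\frac{\psi_n(x)}{\Psi(x)}y_n$ subordinate to a countable locally finite cover of mesh $<\gamma$, where each anchor $y_n$ is chosen in $B(\varphi(x_n),\gamma)\setminus\bigl(Z\oplus\text{span}\{y_1,\dots,y_{n-1}\}\bigr)$; this choice is always possible because one only ever needs to exit a \emph{proper} closed subspace, which has empty interior. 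Equality $\Phi(x)=\Phi(y)$ then forces the (finitely supported) coefficient vectors to agree, by linear independence of the $y_n$ modulo $Z$, and hence $\theta(x)=\theta(y)$, so $x=y$. In short: in the paper only countably many vectors need to be placed in general position relative to a pre-chosen $Z$, whereas your scheme asks an entire infinite dimensional subspace to be transverse to $\overline{\text{span}}(\Phi_0(M))$, which is sometimes impossible. If you want to salvage your architecture, note that the $\Phi_0$ produced by Proposition \ref{coarsecont} is itself a partition-of-unity sum with countably many anchors, so perturbing those anchors as in the paper and sending your $\iota$ into $Z$ turns your argument into the paper's.
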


\begin{proof}
Let $\varphi: M\to E$ be a coarse map, and $\delta>0$. Pick $\gamma>0$ such that $\omega_\varphi(\gamma)+2\gamma<\inf_{s>0}\omega_\varphi(s)+\delta$. Let $Z\subset E$ be a closed infinite dimensional separable subspace such that the quotient space $E/Z$ is infinite dimensional. As $M$ is separable, $M$ isometrically embeds into the space of continuous function on $[0,1]$ with the supremum norm, $C[0,1]$ (see \cite{FHHMZ}, Corollary 5.9). Therefore, as $C[0,1]$ is homeomorphic to $B_Z$ (see \cite{K}), it follows that $M$ homeomorphically embeds into $\gamma\cdot B_Z$. Say $\theta:M\to \gamma\cdot B_Z$ is such embedding.	

Let $\mathcal{U}=\{U_n\}_{n\in \N}$ be a countable locally finite cover of $M$ such that $\text{diam}(U_n)<\gamma$, for all $n\in\N$. For each $n\in\N$, pick $x_n\in U_n$, and let $\psi_n(x)=d(x,U^c_n)$, for all $x\in M$.
 
Define a sequence $(y_n)_n$ in $E$ as follows. Pick $y_1\in B(\varphi(x_1),\gamma)\setminus Z$. Say $y_1,\ldots,y_k$ had been chosen. Then pick $y_{k+1}\in B(\varphi(x_{k+1}),\gamma)\setminus(Z\oplus\text{span}\{y_1,\ldots,y_k\})$. Let $\Psi=\sum_{n\in\N}\psi_n$, and define $\Phi:M\to E$ by

$$\Phi(x)=\theta(x)+\sum_{n\in\N}  \frac{\psi_n(x)}{\Psi(x)}y_n.
$$\hfill

\noindent for all $x\in M$. Clearly, $\Phi$ is continuous, and satisfies the required inequality. To notice that $\Phi$ is injective, notice that, by our choice of $(y_n)_n$, if $\Phi(x)=\Phi(y)$, then $\psi_n(x)/\Psi(x)=\psi_n(y)/\Psi(y)$, for all $n\in\N$. So, $\theta(x)=\theta(y)$, which implies $x=y$.

The last claim follows from the facts that (i) if $\text{dim}(X)<\infty$, then $\text{dim}(Y)\geq \text{dim}(X)$ (see \cite{NY}, Theorem 2.2.5 and Example 2.2.6), and (ii) if an infinite dimensional Banach space $X$  coarsely embeds into $Y$, then $Y$ is also infinite dimensional.
\end{proof}

\section{Simultaneously homeomorphic and coarse embeddings.}\label{sectioncoarsecont}

In this section, we show that if a Banach space $X$ coarsely embeds into a minimal Banach space $Y$, then $X$ simultaneously homeomorphically and coarsely embeds into $Y$. In order to show that, we show that  there exists a map $X\to (\oplus Y)_\mathcal{E}$, where $\mathcal{E}$ is any $1$-unconditional basic sequence, which is simultaneously a homeomorphic and  coarse embedding. \\

The following lemma is an application of the methods of \cite{Ro} to our specific setting (see \cite{Ro}, Lemma 16).

\begin{lemma}\label{christian2}
Suppose $X$ and $E$ are Banach spaces and $P_n : E\to E$ is a sequence of bounded projections onto subspaces $E_n\subset E$ so that $E_m \subset \text{Ker} (P_n)$, for all $m\neq n $. Assume also that, for all $n\in\N$, there exists a coarse embedding $\sigma_n : X\to E_n$ which is also continuous. Then $X$  homeomorphically coarsely embeds into $E$ by a map with uniformly continuous inverse.
\end{lemma}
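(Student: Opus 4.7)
The plan is to combine suitably rescaled and reweighted versions of the $\sigma_n$'s into a single map $f : X \to E$. The direct sum structure (via the projections $P_n$) will provide the lower bounds on $\|f(x) - f(y)\|$, while absolute convergence with small weights will provide continuity and the upper bound on the expansion modulus. The key difference from the proof of Lemma~\ref{christian} is that each $\sigma_n$ here is only continuous rather than uniformly continuous, so I have no control of $\sigma_n$ at small scales; instead, I rescale $\sigma_n$ to push its region of strictly positive compression down to scale $\sim 1/n$, so that the $n$th summand of $f$ will ensure $\rho_f(t) > 0$ for every $t \geq 1/n$.

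First, replacing $\sigma_n$ by $\sigma_n - \sigma_n(0)$, I assume $\sigma_n(0) = 0$. By Proposition~\ref{KALON1}, each $\sigma_n$ is coarse Lipschitz, so I fix $L_n \geq 1$ with $\omega_{\sigma_n}(t) \leq L_n(t+1)$. Since $\sigma_n$ is expanding and $\rho_{\sigma_n}$ is non-decreasing, there exists $T_n > 0$ with $\rho_{\sigma_n}(s) > 0$ for all $s \geq T_n$. Setting $a_n = \max\{nT_n, 1\}$, I define $\tilde{\sigma}_n : X \to E_n$ by $\tilde{\sigma}_n(x) = a_n^{-1}\sigma_n(a_n x)$. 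A direct computation gives $\omega_{\tilde{\sigma}_n}(t) = a_n^{-1}\omega_{\sigma_n}(a_n t) \leq L_n(t+1)$ and $\rho_{\tilde{\sigma}_n}(t) = a_n^{-1}\rho_{\sigma_n}(a_n t)$, the latter being strictly positive as soon as $t \geq 1/n$. Finally, I choose $w_n = (2^n L_n (1 + \|P_n\|))^{-1}$ and set
\[
  f(x) = \sum_{n=1}^{\infty} w_n\, \tilde{\sigma}_n(x).
\]
The estimate $\|w_n \tilde{\sigma}_n(x)\| \leq w_n L_n(\|x\|+1) \leq 2^{-n}(\|x\|+1)$ shows that the series converges absolutely in $E$ and uniformly on bounded subsets of $X$.

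The verification then proceeds as follows. Uniform convergence on bounded sets of continuous partial sums shows $f$ is continuous. The triangle inequality yields $\omega_f(t) \leq \sum_n w_n \omega_{\tilde{\sigma}_n}(t) \leq (t+1)\sum_n 2^{-n} < \infty$, so $f$ is coarse. For the lower bound I use the projections: since $\tilde{\sigma}_m(x) \in E_m$ and $P_n$ annihilates $E_m$ for $m \neq n$, one has $P_n(f(x) - f(y)) = w_n(\tilde{\sigma}_n(x) - \tilde{\sigma}_n(y))$, whence
\[
  \|f(x) - f(y)\| \;\geq\; \frac{w_n}{\|P_n\|}\, \|\tilde{\sigma}_n(x) - \tilde{\sigma}_n(y)\|,
\]
so $\rho_f(t) \geq (w_n/\|P_n\|)\,\rho_{\tilde{\sigma}_n}(t)$ for every $n$. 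Taking $n=1$ and letting $t \to \infty$ gives $\rho_f(t) \to \infty$, so $f$ is a coarse embedding. Given any $t > 0$, choosing $n$ with $1/n \leq t$ yields $\rho_{\tilde{\sigma}_n}(t) > 0$, hence $\rho_f(t) > 0$; as recorded in Subsection~\ref{coarsedef}, this last condition is equivalent to $f^{-1} : f(X) \to X$ being uniformly continuous, which together with the continuity of $f$ makes $f$ a homeomorphic coarse embedding with uniformly continuous inverse.

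The hard part is precisely the uniform continuity of the inverse, i.e., the requirement $\rho_f(t) > 0$ for every $t > 0$, including arbitrarily small $t$. A single continuous coarse embedding $\sigma_n$ only supplies positive compression above some threshold $T_n > 0$; the rescaling by $a_n = nT_n$ is the device that pushes this threshold down to $1/n$, and then the infinite sum defining $f$ exploits infinitely many rescaled $\sigma_n$'s to cover the whole interval $(0,\infty)$ of scales.
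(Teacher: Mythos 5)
Your proof is correct and follows essentially the same strategy as the paper's: rescale $\sigma_n$ so that its region of positive compression is pushed down to scale $1/n$, sum with weights $\sim 2^{-n}/L_n$ to get convergence, coarseness and continuity, and use the projections $P_n$ to extract the $n$th summand for the lower bounds. The only (cosmetic) differences are that the paper splits off $\sigma_1$ separately, writing the embedding as $\sigma_1+\psi$ with $\psi$ valued in $\mathrm{Ker}(P_1)$, and bounds the expansion modulus of the rescaled maps by a chaining argument rather than your direct computation.
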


\begin{proof}
Let us define a continuous coarse map $\psi:X\to \text{Ker} (P_1)$ such that $\psi^{-1}$ exists and is uniformly continuous. Then, by setting $\Psi:X\to E$ as $\Psi(x)=\sigma_1(x)+\psi(x)$, for all $x\in X$, we have that $\Psi$ is a continuous coarse embedding with uniformly continuous inverse. Indeed,  $\Psi$ is clearly coarse and continuous.  As $\|\sigma_1(x)-\sigma_1(y)\|=\|P_1(\Psi(x)-\Psi(y) )\|\leq \|P_1\|\cdot\|\Psi(x)-\Psi(y)\|$, for all $x,y\in X$, it follows that $\Psi$ is expanding. As $\|\psi(x)-\psi(y)\|=\|(\text{Id}-P_1)(\Psi(x)-\Psi(y) )\|\leq \|\text{Id}-P_1\|\cdot\|\Psi(x)-\Psi(y)\|$, for all $x,y\in X$, it follows that $\Psi$ has uniformly continuous inverse. 
  
Without loss of generality, we can assume that $\sigma_n(0)=0$, for all $n\in\N$. As each $\sigma_n$ is a coarse embedding, there exist sequences $(L_n)_{n\in\N}$ and $(\Delta_n)_{n\in\N}$ of positive numbers such that $\omega_{\sigma_n}(t)\leq L_nt+L_n$ (see Proposition \ref{KALON1}) and $\rho_{\sigma_n}(\Delta_n)>1$,  for all $n\in\N$, and all $t\in [0,\infty)$. We can assume that $\Delta_n\geq 1$, for all $n\in\N$. For each $n\in\N$, let $\psi_n:X\to E_n$ be given by

$$\psi_n(x)=\frac{\sigma_n(n\Delta_nx)}{n\Delta_nL_n2^n},$$\hfill

\noindent   and let $\psi(x)=\sum_{n>1}\psi_n(x)$, for all $x\in X$. Clearly, $\psi_n(0)=0$, for all $n\in\N$, and $\psi(0)=0$.\\

\textbf{Claim:} $\psi$ is well defined, coarse, continuous, and $\psi^{-1}$ is uniformly continuous.\\

 For all $x,y\in X$, and all $n\in\N$, there are $x_0,\ldots ,x_n\in X $, such that $x_0=n\Delta_nx$, $x_n=n\Delta_ny$, and $\|x_{j-1}-x_{j}\|=\Delta_n\|x-y\|$, for all $1\leq j\leq n$. So, by the triangle inequality,

$$\|\sigma_n(n\Delta_nx)-\sigma_n(n\Delta_ny)\|\leq \sum_{j=1}^n \|\sigma_n(x_{j-1})-\sigma_n(x_j)\|\leq n\cdot \omega_{\sigma_n}(\Delta_n\|x-y\|).$$\hfill

\noindent Hence, as $\Delta_n\geq 1$, for all $n\in\N$, we have that

\begin{align*}
\|\sum_{n=l}^m\psi_n(x)-\sum_{n=l}^m\psi_n(y)\|&\leq \sum_{n=l}^m\frac{\|\sigma_n(n\Delta_nx)-\sigma_n(n\Delta_ny)\|}{n\Delta_nL_n2^n}\\
&\leq \sum_{n=l}^m\frac{\omega_{\sigma_n}(\Delta_n\|x-y\|)}{\Delta_n L_n2^n}\leq\frac{\|x-y\|+1}{2^{l-1}}.
\end{align*}\hfill
 
In particular, as $\psi_n(0)=0$, for all $n\in\N$, we have that $\|\sum_{n=l}^m\psi_n(x)\|\leq(\|x\|+1)/2^{l-1}$, for all $x\in X$, and all $l,m\in\N$, with $l\leq m$.  Hence,  $\psi$ is well defined. Similarly, the argument above gives us that $\omega_\psi(t)\leq t+1$, for all $t>0$, so $\psi$ is coarse. 

Let $x\in X$, and $\eps>0$. Choosing $N\in \N$ such that $1/2^N<\eps/4$, we have that, for all $y\in X$, with $\|x-y\|\leq 1$,

\begin{align*}
\|\psi(x)-\psi(y)\|&\leq \sum_{n\leq N}\|\psi_n(x)-\psi_n(y)\|+ \sum_{n>N}\frac{\|x-y\|+1}{2^n}\\
&\leq  \sum_{n\leq N}\|\psi_n(x)-\psi_n(y)\|+\frac{\eps}{2}.
\end{align*}\hfill

\noindent By the continuity of each $\psi_n$ at $x$, there exists $\delta\in(0,1)$ such that $\sum_{n\leq N}\|\psi_n(x)-\psi_n(y)\|<\eps/2$, whenever $\|x-y\|<\delta$. Then, $\|\psi(x)-\psi(y)\|<\eps$, if $\|x-y\|<\delta$. So $\psi$ is continuous.

Let us show that $\psi^{-1}$ exists and it is uniformly continuous. For this, we only need to show that, for all $\eps>0$, there exists $\delta>0$ such that, for all $x,y\in X$,

$$\|x-y\|>\eps \ \Rightarrow \  \|\psi(x)-\psi(y)\|>\delta.$$\hfill 

As $\rho_{\sigma_n}(\Delta_n)>1$, for all $n\in\N$, if $x,y\in X$ and $\|x-y\|>1$, then   $\|\sigma_n(\Delta_nx)-\sigma_n(\Delta_ny)\|>1$. Fix $\eps>0$, and pick  $n\in\N$ such that $1/n<\eps$. Then, if $\|x-y\|>\eps$, we have that

\begin{align*}
\|\psi(x)-\psi(y)\|&\geq \frac{\|\sigma_n(n\Delta_nx)-\sigma_n(n\Delta_ny)\|}{\|P_n\|n\Delta_nL_n2^n}\\
&\geq \frac{1}{\|P_n\|n\Delta_nL_n2^n},
\end{align*}\hfill

\noindent  Hence, $\psi^{-1}$ is uniformly continuous, and we are done.\end{proof}

\noindent \emph{Proof of Theorem \ref{mainmain}(ii).} Let $\varphi: X\to Y$ be a coarse embedding. By Theorem \ref{Thmcontcoarproj}, we can assume that $\varphi$ is also continuous. As in the proof of item (i) of Theorem \ref{mainmain},  $Y$  contains an unconditional basic sequence $(e_n)_n$. Let $(A_n)_n$ be a partition of $\N$ into infinite subsets, and set $E=\overline{\text{span}}\{e_j\mid j\in\N\}$ and  $E_n=\overline{\text{span}}\{e_j\mid j\in A_n\}$, for all $n\in\N$. As $Y$ is minimal, there exists a sequence of isomorphic embeddings $T_n:Y\to E_n$. So, $T_n\circ\varphi$ is a continuous coarse embedding of $X$ into $E_n$, for all $n\in\N$.  For each $n\in\N$, let $P_n:E\to E_n$ denote the natural projection.  We can now apply Lemma \ref{christian2}, so, $X$ simultaneously homeomorphically and coarsely embeds into $Y$ by a map with uniformly continuous inverse. 
\qed\\

The following corollary is a trivial consequence of Theorem \ref{mainmain}(ii).

\begin{cor}\label{cor2222}
If a Banach space $X$ coarsely embeds into $T^*$ (resp. $S$), then $X$ simultaneously homeomorphically and coarsely embeds into $T^*$ (resp. $S$) by a map with uniformly continuous inverse.
\end{cor}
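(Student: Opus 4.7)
The plan is essentially to invoke Theorem \ref{mainmain}(ii) directly. The only hypothesis in that theorem besides $X$ coarsely embedding into the target is that the target $Y$ is a minimal Banach space, so all I need to do is record that $T^*$ and $S$ satisfy this property. This is already noted in the paragraph preceding Corollary \ref{cor1111}: Figiel--Johnson's original Tsirelson dual $T^*$ is minimal by \cite{CS}, Theorem VI.a.1, and Schlumprecht's space $S$ is minimal by \cite{AnS}, Theorem 2.1.

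Therefore I would just write: assume $X$ coarsely embeds into $T^*$ (the argument for $S$ is identical). Since $T^*$ is minimal, Theorem \ref{mainmain}(ii) applies and yields a map $X \to T^*$ which is simultaneously a coarse embedding and a homeomorphic embedding with uniformly continuous inverse, finishing the proof. There is no real obstacle here; the work was all done in proving Theorem \ref{mainmain}(ii) (which in turn relied on Theorem \ref{Thmcontcoarproj} to replace the coarse embedding by a continuous one, on Gowers' dichotomy to extract an unconditional basic sequence from $Y$, and on Lemma \ref{christian2} to assemble the simultaneous embedding). The corollary is merely a concrete instantiation to two well-known examples of minimal spaces.
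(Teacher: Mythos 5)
Your proof is correct and is exactly the paper's argument: the paper states the corollary is "a trivial consequence of Theorem \ref{mainmain}(ii)," having already recorded the minimality of $T^*$ and $S$ with the same references. Nothing further is needed.
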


\noindent \emph{Proof of Theorem \ref{homeocoarseemb}.}
If $X$ coarsely embeds into $Y$, by Theorem \ref{Thmcontcoarproj}, $X$ coarsely embeds into $Y$ by a continuous map. Let $E=(\oplus Y)_\mathcal{E}$, and $E_n=\{(x_n)_n\in E\mid \forall j\neq n,\  x_j=0\}$, for all $n\in\N$. Then, by  Lemma \ref{christian2}, $X$ homeomorphically coarsely embeds into $E$ by a map with uniformly continuous inverse.
\qed\\

The following simpler version of Problem \ref{problemaprin} could be slightly easier to be proven, and it would be a significant advance on this problem. 

\begin{problem}
Let $X$ and $Y$ be Banach spaces, and assume that   $X$ coarsely embeds into $Y$. Does $B_X$ uniformly embed into $Y$? What if $Y$ is minimal?
\end{problem}

It is worth noticing that one cannot hope that $X$ coarsely embeds into $Y$ if and only if $B_X$ uniformly embeds into $Y$ (even if we restrict ourselves to minimal spaces $Y$). Indeed, it is well known that all the $\ell_p$'s have uniformly homeomorphic balls (see \cite{OS}, Theorem 2.1), but $\ell_p$ does not coarsely embed into $\ell_2$ for any $p>2$ (see \cite{JR}, Theorem 1, or \cite{MN}, Theorem 1.11).

\section{Continuous coarse sections.}\label{sectioncoarsehomeo}

In \cite{Ka}, Kalton proved (Theorem $8.9$) that the concepts of coarse equivalence and uniform homeomorphism are actually distinct concepts, i.e.,  Kalton presented two Banach spaces $X$ and $Y$ which are coarsely equivalent but not uniformly homeomorphic. However, the coarse equivalence presented in \cite{Ka} only preserves the large scale geometries of $X$ and $Y$ and does not need to be a homeomorphism. In this section, we show that Kalton's example is actually an example of Banach spaces which are simultaneously  homeomorphically and coarsely equivalent, but  not uniformly homeomorphic. 

Let $X$ and $Y$ be Banach spaces, and let $Q:Y\to X$ be a quotient map. If $A\subset X$, we say that $f:A\to Y$ is a \emph{section of $Q$} if $Q\circ f=\text{Id}_A$. Kalton's argument is based on the construction of a quotient map $Q:Y\to X$ for which a coarse section $X\to Y$ exists, but $X$ does not coarse Lipschitz embed into $Y$ by map which is also uniformly continuous (see \cite{Ka}, Theorem 8.8). In particular, $Q$ has no uniformly continuous section $X\to Y$. In this section, we show that if a quotient map $Q: Y\to X$ admits a coarse section, then it admits a continuous coarse section. As a corollary, we get the strengthening of Kalton's result mentioned above.\\

The proof of the following lemma uses ideas in the proof of Proposition 6.5 of \cite{Ka}.

\begin{lemma}\label{contsec}
Let $X$ and $Y$ be Banach spaces, and let $Q:Y\to X$ be a quotient map.  Assume that there exists a coarse section $\varphi: X\to Y$. Then, there exists $L>1$ such that, for every $\eps>0$, there exists a continuous section $\psi:\partial B_X\to Y$ of cL-type $(L,\eps)$.
\end{lemma}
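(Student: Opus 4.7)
The plan is to build $\psi$ by combining three ingredients: a rescaling of $\varphi$ that makes the additive constant in its coarse Lipschitz estimate arbitrarily small, a partition-of-unity smoothing (as in the proof of Theorem \ref{Thmcontcoarproj}) to force continuity on $\partial B_X$, and a Bartle--Graves type correction term that restores the identity $Q\circ \psi = \text{Id}$ after the smoothing step.

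By Proposition \ref{KALON1}, fix $K\geq 1$ with $\omega_\varphi(t)\leq Kt+K$, and set $L=K$. For each $N\in\N$, the rescaled map $\varphi_N(x)=N^{-1}\varphi(Nx)$ is still a section of $Q$, since $Q$ is linear, and satisfies $\omega_{\varphi_N}(t)\leq Kt+K/N$. Given $\eps>0$, pick a locally finite open cover $\{U_i\}_{i\in I}$ of $\partial B_X$ with $\text{diam}(U_i)<\mu$ for some $\mu>0$ to be determined, select $x_i\in U_i$, and let $(\lambda_i)_{i\in I}$ be a continuous partition of unity subordinate to $\{U_i\}$. By the Bartle--Graves theorem, fix a continuous positively homogeneous section $\tau: X\to Y$ of $Q$ satisfying $\|\tau(z)\|\leq M\|z\|$ for some $M>0$. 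Define
\[
\psi(x)=\sum_{i\in I}\lambda_i(x)\,\varphi_N(x_i)+\tau\Bigl(x-\sum_{i\in I}\lambda_i(x)\,x_i\Bigr),\qquad x\in \partial B_X.
\]
Since $\sum_i \lambda_i(x)=1$ and $Q$ is linear, one reads off $Q(\psi(x))=\sum_i\lambda_i(x)x_i+(x-\sum_i\lambda_i(x)x_i)=x$, so $\psi$ is a section of $Q$; continuity follows from local finiteness of $\{U_i\}$ together with the continuity of $\tau$.

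For the cL-type estimate, write $A(x)=\sum_i\lambda_i(x)\varphi_N(x_i)$ and $\bar{x}=\sum_i\lambda_i(x)x_i$. Since $\lambda_i(x)>0$ forces $\|x-x_i\|<\mu$, a convex combination argument using $\varphi_N(x)=\sum_i\lambda_i(x)\varphi_N(x)$ gives $\|A(x)-\varphi_N(x)\|\leq \omega_{\varphi_N}(\mu)\leq K\mu+K/N$; combined with the coarse Lipschitz bound on $\varphi_N$ this yields $\|A(x)-A(y)\|\leq K\|x-y\|+3K/N+2K\mu$. Meanwhile $\|x-\bar{x}\|,\|y-\bar{y}\|\leq \mu$, so $\|\tau(x-\bar{x})-\tau(y-\bar{y})\|\leq 2M\mu$. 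Adding the two contributions gives
\[
\omega_\psi(t)\leq Kt+\tfrac{3K}{N}+2(K+M)\mu,
\]
and the proof is completed by choosing $N$ large and $\mu$ small so that $3K/N+2(K+M)\mu<\eps$. The main delicate point — the only place where the plan could fail — is controlling the correction term: one has to ensure that $\tau(x-\bar{x})$ does not spoil the linear modulus inherited from $\varphi_N$. This is handled by the observation that the argument of $\tau$ has norm at most $\mu$ uniformly in $x$, so the correction contributes only a bounded additive error, absorbed into $\eps$, while the universal constant $L=K$ is independent of $\eps$.
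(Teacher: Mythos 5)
Your proof is correct, but it takes a genuinely different route from the paper's. Both arguments begin the same way, rescaling $\varphi$ to $\varphi_N(x)=N^{-1}\varphi(Nx)$ to shrink the additive constant, and both then smooth with a partition of unity. The divergence is in how the section property is restored after smoothing. The paper applies Theorem \ref{Thmcontcoarproj} to get a continuous $\Phi_{n_0}$ that is only an \emph{approximate} section ($\|x-Q(\Phi_{n_0}(x))\|\leq 2L\|Q\|/n_0$), symmetrizes and homogenizes it into $h\in\mathcal{HC}(X,Y)$, and then kills the defect $g(x)=x-Q(h(x))$ by the geometric iteration $\psi(x)=\sum_{n\geq 0}h(g^n(x))$, exploiting $\|g^n(x)\|\leq\lambda^n\|x\|$ — in effect reproving a quantitative Bartle--Graves theorem from scratch. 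You instead arrange the smoothing so that $Q(A(x))$ is the explicitly known point $\bar{x}=\sum_i\lambda_i(x)x_i$, and correct the defect $x-\bar{x}$ (which has norm at most $\mu$, uniformly) in a single step using a Bartle--Graves selector $\tau$; since only the crude bound $\|\tau(z)\|\leq M\|z\|$ on arguments of norm $\leq\mu$ enters the modulus estimate, the correction contributes $2M\mu$, absorbed into $\eps$. Your estimates $\|A(x)-A(y)\|\leq K\|x-y\|+3K/N+2K\mu$ and $Q\circ\psi=\mathrm{Id}$ check out, and crucially $L=K$ is fixed before $\eps$ is chosen, as the quantifier order in the lemma requires. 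What your version buys is brevity and the elimination of both the homogenization step and the iteration; what it costs is the importation of the Bartle--Graves selection theorem (which should be cited in the form: a bounded linear surjection admits a continuous section $\tau$ with $\tau(0)=0$ and $\|\tau(z)\|\leq M\|z\|$ — positive homogeneity is not actually needed for your argument), whereas the paper's proof is self-contained modulo the machinery it has already built in Section \ref{sectioncontcoarproj}. Two trivial touch-ups: take $L=\max\{K,2\}$ to meet the stated requirement $L>1$, and note that the partition of unity exists because the metric space $\partial B_X$ is paracompact.
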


\begin{proof}
Let $\varphi:X\to Y$ be a coarse section. So, there exists $L>1$ such that $\omega_\varphi(t)\leq Lt+L$, for all $t>0$ (see Proposition \ref{KALON1}). Fix $\eps\in (0,1)$, and let us show that the required continuous section $\psi$ of cL-type $(L,\eps)$ exists. 

For each $n\in\N$, let $\varphi_n(x)=\varphi(nx)/n$. So,  each $\varphi_n$ is a coarse section, and $\omega_{\varphi_n}(t)\leq Lt+Ln^{-1}$, for all $t>0$. For each $n\in\N$, let $\Phi_n:X\to Y$ be the continuous map given by Theorem \ref{Thmcontcoarproj} applied to $\varphi_n$, $L/n$, and $A=\emptyset$. Hence, we have that 

$$\sup_{x\in X} \|\varphi_n(x)-\Phi_n(x)\|\leq \frac{2L}{n},$$\hfill

\noindent for all $n\in\N$. In particular, $\omega_{\Phi_n}(t)\leq Lt+ 5L/n$, and 

$$\|x-Q(\Phi_n(x))\|\leq \|x-Q(\varphi_n(x))\|+\|Q(\varphi_n(x))-Q(\Phi_n(x))\|\leq \frac{2L\|Q\|}{ n}$$\hfill

\noindent for all $n\in\N$, and all $x\in X$. Let $\lambda\in (0,1)$ be such that $\sum_{n\in\N}\lambda^n< \frac{\eps}{8L}$. Fix $n_0\in\N$ large enough so that $2L\|Q\|/n_0<\lambda$, and $5L/n_0<\eps/2$.

Let  $h:X\to Y$ be given by

\begin{align*}
h(x)=\left\{\begin{array}{ll}
0, & \text{ if }\ \ x=0,\\
\frac{\|x\|}{2}\Big(\Phi_{n_0}\Big(\frac{x}{\|x\|}\Big)-\Phi_{n_0}\Big(-\frac{x}{\|x\|}\Big)\Big), & \text{ if }\ \ x\neq 0.\end{array}\right.
\end{align*}\hfill

\noindent Then  $h$ is continuous, positively homogeneous, and bounded on bounded sets. Also, it is clear that $\|x-Q(h(x))\|\leq \lambda\|x\|$, for all  $x\in X$. Let $g(x)=x-Q(h(x))$. Then, as $g$ is positively homogeneous, we have that $\|g^n(x)\|\leq \lambda^n \|x\|$, for all $n\in\N$, and all $x\in X$. Set $g^0(x)=x$  and let

$$\psi(x)=\sum_{n=0}^\infty h(g^n(x)),$$\hfill

\noindent for all $x\in \partial B_X$.

As $h$ is positively homogeneous, the series above converges uniformly on bounded sets, so $\psi$ is well defined and continuous.  Also, as $g^n(x)-Q(h(g^n(x)))=g^{n+1}(x)$, we have that

$$Q(\psi(x))=\sum_{n=0}^\infty (g^n(x)-g^{n+1}(x))=x,$$\hfill

\noindent for all $x\in X$. So $\psi:\partial B_X\to Y$ is a continuous section of $Q$.

It remains to notice that $\psi$ is of cL-type $(L,\eps)$. Notice that, as $5L/n_0<\eps/2$, we have that $\sup_{x\in B_X}\|h(x)\|< 2L$ and $\omega_{h_{|\partial B_X}}(t)\leq Lt+\eps/2$, for all $t>0$. Then

\begin{align*}
\|\psi(x)-\psi(y)\|&\leq \|h(x)- h(y)\|+\sum_{n=1}^\infty \|h(g^n(x))\|+\sum_{n=1}^\infty \|h(g^n(y))\|\\
&\leq L\|x-y\|+\frac{\eps}{2}+ 2\cdot  \sup_{x\in B_X}\|h(x) \|\cdot\sum_{n=1}^\infty\lambda^n\\
&\leq  L\|x-y\|+\eps,
\end{align*}\hfill

\noindent for all $x,y\in \partial B_X$. So, $\psi$ is of cL-type $(L,\eps)$, and we are done.
\end{proof}

The next technical  lemma is the continuous version of Lemma $7.4$ of \cite{Ka}, and it will play a fundamental role in the proof of Theorem \ref{homeocoarseequiv}.

\begin{lemma}\label{teckal}
Let $X$ and $Y$ be Banach spaces and consider a map $t\in[0,
\infty)\mapsto f_t\in \mathcal{H}(X,Y)$ with the property that, for some $K>0$,

$$\|f_t\|_{e^{-2t}}\leq K,\ \text{ and }\ \|f_t-f_s\|\leq K|t-s|, \ \ \forall \ t,s\geq 0.$$\hfill

\noindent Define $F:X\to Y$ as 

\begin{align*}F(x)=\left\{\begin{array}{ll}
f_0(x), & \|x\|\leq 1,\\
f_{\ln\|x\|}(x), & \|x\|>1.
\end{array}\right.\end{align*}\hfill

\noindent Then $F$ is coarse. Moreover, if $f_t\in\mathcal{HC}(X,Y)$, for all $t\geq 0$, then $F$ is continuous.
\end{lemma}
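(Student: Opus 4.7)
The plan is to handle coarseness and continuity of $F$ separately. The guiding observation is that the logarithmic rescaling $t = \ln\|x\|$ is chosen so that at a point $x$ of norm $e^s$ the ``error scale'' $e^{-2s}\|x\| = e^{-s}$ stays bounded, matching the Lipschitz-like behaviour encoded in $\|f_s\|_{e^{-2s}} \leq K$ to the natural scale of $x$. Combined with the Lipschitz-in-$t$ control $\|f_t - f_s\| \leq K|t-s|$, this will let me produce a linear bound on $\|F(x) - F(y)\|$.

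For coarseness I would split into three cases. \emph{Case A} ($\|x\|, \|y\| \leq 1$): here $F = f_0$ on $B_X$ and $\|f_0\|_1 \leq K$ gives $\|F(x)-F(y)\| \leq K(\|x-y\|+1)$ directly. \emph{Case B} ($\|x\|, \|y\| > 1$, say $\|x\| \geq \|y\|$, with $s = \ln\|x\|$, $t = \ln\|y\|$): decompose $F(x) - F(y) = (f_s(x) - f_s(y)) + (f_s(y) - f_t(y))$. The first term is controlled by $\|f_s\|_{e^{-2s}} \leq K$ (since $e^{-2s}\|x\|, e^{-2s}\|y\| \leq e^{-s} \leq 1$), giving $K\max\{\|x-y\|, 1\}$. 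The second is at most $K|s-t|\|y\|$, and the elementary estimate $\ln(1+u) \leq u$ applied to $u = (\|x\|-\|y\|)/\|y\|$ yields $\|y\||s - t| \leq \|x - y\|$. \emph{Case C} ($\|x\| \leq 1 < \|y\|$): introduce $z = y/\|y\| \in \partial B_X$, note $\|x-z\| \leq 2\|x-y\|$, and estimate $\|F(x)-F(z)\|$ by Case A. For $\|F(y)-F(z)\|$, positive homogeneity gives $F(y) = \|y\|f_{\ln\|y\|}(z)$ while $F(z) = f_0(z)$, so
\[
\|F(y)-F(z)\| \leq (\|y\|-1)\|f_{\ln\|y\|}\| + \|f_{\ln\|y\|} - f_0\| \leq K(\|y\|-1) + K\ln\|y\| \leq 2K\|x-y\|,
\]
again by $\ln(1+u)\leq u$. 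The three cases together yield a linear bound in $\|x-y\|$, so $F$ is coarse (indeed, coarse Lipschitz).

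For continuity, on the open set $\{\|x\| < 1\}$ the map $F$ coincides with $f_0$, so continuity is immediate from continuity of $f_0$. On $\{\|x\| > 1\}$, for $y \to x$ I would write
\[
F(y) - F(x) = (f_{\ln\|y\|}(y) - f_{\ln\|x\|}(y)) + (f_{\ln\|x\|}(y) - f_{\ln\|x\|}(x));
\]
the first term tends to $0$ via $\|f_{\ln\|y\|} - f_{\ln\|x\|}\|\cdot\|y\| \leq K|\ln\|y\| - \ln\|x\||\cdot\|y\| \to 0$, the second by continuity of $f_{\ln\|x\|}$. At boundary points $\|x\| = 1$ the two defining formulas agree, and for $y \to x$ with $\|y\|>1$, inserting $f_0(y)$ produces an extra error $\|f_{\ln\|y\|}(y) - f_0(y)\| \leq K\ln\|y\|\cdot\|y\| \to 0$, which together with continuity of $f_0$ closes the argument.

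The main obstacle is verifying that Case C and the $(f_s - f_t)(y)$ term in Case B produce \emph{linear} (rather than merely polynomial) bounds in $\|x-y\|$; the elementary inequality $\ln(1+u)\leq u$ is precisely what makes the logarithmic reparametrization compatible with the Lipschitz-in-$t$ hypothesis in both places, so nothing more sophisticated than careful bookkeeping is needed.
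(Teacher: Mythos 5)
Your proof is correct, and it is self-contained where the paper's is not: the paper simply quotes Kalton's Lemma 7.4 of \cite{Ka} for the coarseness of $F$ (via the inequality $\|F(x)-F(z)\|\leq 3K\|x-z\|+2K\min\{\|x\|,\|x\|^{-1}\}+2K\min\{\|z\|,\|z\|^{-1}\}$) and only proves the continuity claim. Your three-case argument for coarseness checks out --- the key points, namely that $e^{-2s}\|x\|=e^{-s}\leq 1$ when $s=\ln\|x\|$, that $\|y\|\,|\ln\|x\|-\ln\|y\||\leq\|x-y\|$ via $\ln(1+u)\leq u$, and the auxiliary point $z=y/\|y\|$ in the mixed case with $\|x-z\|\leq 2\|x-y\|$ --- all hold, and they yield a coarse Lipschitz bound of the same shape as Kalton's. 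For continuity your route also differs from the paper's: you use the direct two-term splitting $F(y)-F(x)=(f_{\ln\|y\|}-f_{\ln\|x\|})(y)+(f_{\ln\|x\|}(y)-f_{\ln\|x\|}(x))$, bounding the first term by $K|\ln\|y\|-\ln\|x\||\cdot\|y\|$ and the second by continuity of the single map $f_{\ln\|x\|}$, whereas the paper discretizes the parameter interval $[0,b]$ into a grid $\{kb/N\}$ and compares both $F(x)$ and $F(z)$ to a common $f_{kb/N}$. Your argument is more economical; the paper's discretization buys nothing extra here since only pointwise continuity is being claimed. One small bookkeeping remark: in Case C the term $\|f_{\ln\|y\|}-f_0\|$ should formally be $\|(f_{\ln\|y\|}-f_0)(z)\|\leq\|f_{\ln\|y\|}-f_0\|\cdot\|z\|$, which is harmless since $\|z\|=1$.
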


In Lemma $7.4$ of \cite{Ka}, the author shows that the map $F$ above is coarse, and, under the assumption that $f_t$ is \emph{uniformly} continuous, for all $t\geq 0$, the author shows that $F$ is also uniformly continuous. Therefore, we only present the proof that $F$ is continuous if each $f_t$ is so.\smallskip

\noindent \textit{Sketch of the proof.} For convenience, let $f_t=f_0$, if $t<0$. In the proof of Lemma 7.4 of \cite{Ka}, Kalton shows that 

\begin{align}\label{equa}\|F(x)-F(z)\|\leq 3K\|x-z\|+2K\min\{\|x\|,\|x\|^{-1}\}+2K\min\{\|z\|,\|z\|^{-1}\},
\end{align}\hfill

\noindent for all $x,z\in X$. In particular, $\omega_F(t)\leq 3Kt+4K$, so $F$ is coarse. 

Let us show that $F$ is continuous if each $f_t\in \mathcal{HC}(X,Y)$, and  the map $t\mapsto f_t$ is continuous.  Note that, as $F(x)=f_0(x)$ if $\|x\|\in[0,1)$, $F$ is continuous at $x$ if $\|x\|\in [0,1)$.  Therefore, we only need to show that $F$ is continuous at $x$ if $\|x\|\geq 1$. 

Let $x\in X$, with $\|x\|\geq 1$, and fix $\eps>0$. Pick $\delta_0\in (0,1)$ such that $K\delta_0<\eps/6$, and $a>1$ such that $4K/a<\eps/2$.  If $\|x\|>a$, pick $\delta_1\in (0, \min\{\delta_0,\|x\|-a\})$. By Equation \ref{equa}, if $\|x-z\|<\delta_1$, we have

$$\|F(x)-F(z)\|\leq 3K\delta_1+4Ka^{-1}<\eps.$$\hfill

Say $\|x\|\leq a$. Let $b=\ln(a+1)$. Pick $N>b$ such that $Kb/N<\eps/(3e^b)$. Then $|s-t|\leq b/N$  implies $\|f_s-f_t\|<\eps/(3e^b)$. 

By the continuity of each $f_t$, there exists $\delta_2\in (0,\min\{b/N,1\})$ such that $\|x-z\|<\delta_2$ implies

$$\|f_{kb/N}(x)-f_{bk/N}(z)\|<\eps/3, \ \ \text{ for all }\ \  0\leq k\leq N.$$\hfill

\noindent Making $\delta_2$ smaller if necessary, we can also assume that $\|x-z\|<\delta_2$ implies $|\ln\|x\|-\ln\|z\||<b/(2N)$.

Fiz $z\in X$ with $\|x-z\|<\delta_2$. As $\|x\|\in[ 1, a]$,  we have that $\ln\|x\|,\max\{\ln\|z\|,0\}\in [0, b]$. Therefore, as $|\ln\|x\|-\ln\|z\||<b/(2N)$, there exists $k\in\{0,\ldots ,N\}$ such that

$$\left|\ln\|x\|-\frac{kb}{N}\right|,\left|\max\{\ln\|z\|,0\}-\frac{kb}{N}\right|\leq \frac{b}{N}.$$\hfill

\noindent As $\|x-z\|<\delta_2$,  we have that $\|x\|/e^b,\|z\|/e^b\leq 1$. Therefore, we conclude that

\begin{align*}\|F(x)-F(z)\|&\leq \|F(x)-f_{kb/N}(x)\|+\|f_{kb/N}(x)-f_{kb/N}(z)\|\\
&\ \ \ \ +\|f_{kb/N}(z)-F(z)\|\\
&\leq \frac{\eps\|x\|}{3e^b}+ \frac{\eps}{3}+\frac{\eps\|z\|}{3e^b}\leq \eps.
\end{align*}\hfill

\noindent So, $F$ is continuous at $x$.\qed
\smallskip

\begin{prop}\label{existcontsec}
Let $Q:Y\to X$ be a quotient map. Assume that there exists a constant $L>1$, a sequence $(\eps_n)_n$ of positive real numbers converging to zero, and a sequence of continuous sections $\varphi_n:\partial B_X\to Y$ such that $\varphi_n$ is of $cL$-type $(L,\eps_n)$, for all $n\in\N$. Then $Q$ has a continuous coarse section.
\end{prop}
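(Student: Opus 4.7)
My plan is to apply Lemma \ref{teckal} to a continuous one-parameter family $(f_t)_{t\geq 0}$ in $\mathcal{HC}(X,Y)$ built by interpolating positively homogeneous extensions of the given sections $\varphi_n$. The resulting map $F$ will automatically be continuous and coarse, and the section property will be preserved because each $f_t$ will itself be a section of $Q$.

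The first step is a normalization: shifting $\varphi_n$ by any fixed element of $\ker Q$ alters neither its cL-type, nor its continuity, nor its being a section of $Q$. Fix $x_0\in\partial B_X$; by the open mapping theorem pick $y_0\in Q^{-1}(x_0)$ with $\|y_0\|$ bounded by some constant depending only on $\|Q\|$. Replacing each $\varphi_n$ by $\tilde\varphi_n:=\varphi_n-\varphi_n(x_0)+y_0$, the triangle inequality yields $\|\tilde\varphi_n(x)\|\leq\|y_0\|+L\|x-x_0\|+\eps_n\leq\|y_0\|+2L+\eps_1=:M$ for all $x\in\partial B_X$ and all $n$. Since cL-type $(L,\eps)$ is preserved under increasing $\eps$, I then pass to a subsequence and relabel so that the cL-type parameters form the geometric sequence $\eps_n=2^{-n}$.

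Next, define positively homogeneous continuous extensions $f_n\in\mathcal{HC}(X,Y)$ by $f_n(x):=\|x\|\tilde\varphi_n(x/\|x\|)$ for $x\neq 0$ and $f_n(0):=0$. A direct calculation using the standard estimate $\|x/\|x\|-y/\|y\|\|\leq 2\|x-y\|/\min\{\|x\|,\|y\|\}$ yields
$$\|f_n(x)-f_n(y)\|\leq (2L+M)\|x-y\|+\eps_n\min\{\|x\|,\|y\|\},$$
hence $\|f_n\|_\eta\leq 2L+M+1$ for $\eta\geq\eps_n$ and $\|f_n\|_\eta\leq 2L+M+\eps_n/\eta$ for $\eta<\eps_n$. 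Set $t_n:=\tfrac{n}{2}\ln 2$, so that $e^{-2t_n}=\eps_n$ and the gaps $t_{n+1}-t_n=\tfrac{1}{2}\ln 2$ are uniform. For $t\in[t_n,t_{n+1}]$ define the convex combination $f_t:=(1-s)f_n+s\,f_{n+1}$ with $s:=(t-t_n)/(t_{n+1}-t_n)$. Each $f_t$ lies in $\mathcal{HC}(X,Y)$ and satisfies $Q\circ f_t=\text{Id}_X$ as a convex combination of sections.

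It remains to check the hypotheses of Lemma \ref{teckal}. For the spatial estimate, on $[t_n,t_{n+1}]$ one has $e^{-2t}\geq\eps_{n+1}$ and $\eps_n/e^{-2t}\leq e^{2(t_{n+1}-t_n)}=2$, giving $\|f_t\|_{e^{-2t}}\leq 2L+M+2$. For the temporal estimate, $\|f_{n+1}-f_n\|\leq 2M$ by the uniform bound on the $\tilde\varphi_n$, and $t_{n+1}-t_n=\tfrac{1}{2}\ln 2$, so $\|f_t-f_{t'}\|\leq (4M/\ln 2)|t-t'|$ on each piece and globally by the triangle inequality. Both hypotheses hold with $K:=\max\{2L+M+2,\,4M/\ln 2\}$, and Lemma \ref{teckal} produces a continuous coarse map $F:X\to Y$. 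Since $F(x)=f_0(x)$ for $\|x\|\leq 1$ (where $t_0=0$) and $F(x)=f_{\ln\|x\|}(x)$ otherwise, and every $f_t$ is a section, so is $F$. The main obstacle is balancing the two norm requirements of Lemma \ref{teckal} simultaneously: the spatial requirement forces the $\eps_n$ not to shrink too fast relative to the scale $e^{-2t}$, while the temporal requirement forces $\|f_{n+1}-f_n\|$ to be controlled uniformly in $n$; these two constraints are what dictate both the uniform-bound normalization and the geometric choice of the $\eps_n$.
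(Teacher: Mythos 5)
Your proposal is correct and follows essentially the same route as the paper: extend the sections positively homogeneously to elements of $\mathcal{HC}(X,Y)$, interpolate linearly in $t$ after rescaling so that $\eps_n\approx e^{-2t_n}$, verify the two norm hypotheses, and feed the family into Lemma \ref{teckal}. The only (cosmetic) differences are that the paper achieves the uniform bound on the sphere by antisymmetrizing, $\psi_n(x)=\tfrac12(\varphi_n(x)-\varphi_n(-x))$, rather than translating by a fixed preimage, and it quotes Proposition \ref{estim} for the $\|\cdot\|_\eps$ estimate that you verify by hand.
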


\begin{proof}
Without loss of generality, we may assume that $\eps_n<e^{-2n}$, for all $n\in\N$. For each $n\in\N$, let $\psi_n(x)=1/2(\varphi_n(x)-\varphi_n(-x))$, for all $x\in \partial B_X$. So, each $\psi_n$ is a continuous section of $Q$ of cL-type $(L,\eps_n)$ and  $\|\psi_n(x)\|\leq 2L$, for all $n\in\N$, and all $x\in \partial B_X$.

By Proposition \ref{estim}, we can extend each $\psi_n$ to an $f_{n-1}\in\mathcal{HC}(X,Y)$ so that $f_{n-1}$ is a section of $Q$, and $\|f_{n-1}\|_{e^{-2n}}\leq 8L$, for all $n\in\N$.  For each $t\geq 0$, we define $f_t:X\to Y$ as follows. If $t\in[n-1,n]$, let

$$f_t(x)=(n-t)f_{n-1}(x)+(t-n+1)f_{n}(x).$$\hfill

\noindent Clearly $t\mapsto f_t$ is  continuous. Indeed, $\|f_t-f_s\|\leq 4L|t-s|$, for all $t,s\in [n-1,n]$.  

Notice that $\|f_t\|_{e^{-2t}}\leq 8L$, for all $t\geq 0$. Let $F$ be the map obtained by Lemma \ref{teckal} for the maps $(f_t)_{t\geq 0}$. Then $F$ is a continuous  coarse section of $Q$.
\end{proof}

\noindent \emph{Proof of Theorem \ref{homeocoarseequiv}.}
If the quotient map $Q:Y\to X$ has a coarse section $X\to Y$, it follows from Lemma \ref{contsec} and Proposition \ref{existcontsec} that $Q$ has a continuous coarse section. Let $\varphi:X\to Y$ be such section. Then, the map $y\mapsto (y-\varphi(Q(y)),Q(y))$ is both a homeomorphism and a  coarse equivalence between  $Y$  and $\text{Ker}(Q)\oplus X$ with inverse $(x,z)\mapsto x+\varphi(z)$.
\qed\\

\noindent \emph{Proof of Corollary \ref{homeocoarseequivC}.}
By Proposition 8.4 and Theorem 8.8 of \cite{Ka}, there exist separable Banach spaces $X$ and $Y$, and a quotient map $Q:Y\to X$ which admits a coarse section $\varphi: X\to Y$, but $X$ does not coarse Lipschitz embed into $Y$ by a uniformly continuous map. Hence,  $Y$ and $\text{Ker}(Q)\oplus X$ are not uniformly homeomorphic.  By Theorem \ref{homeocoarseequiv},  $Y$ and $\text{Ker}(Q)\oplus X$ are simultaneously homeomorphically and coarsely equivalent.
\qed\\

This raises the question of when two Banach spaces are simultaneously homeomophically and coarsely equivalent. It is well known that any two Banach spaces with the same density character are homeomorphic (see \cite{K}, and \cite{T}). But what about if $X$ and $Y$ are coarsely equivalent? Can we get both coarse equivalence and topological equivalence at the same time?

\begin{problem}
Let $X$ and $Y$ be Banach spaces, and assume that $X$ and $Y$ are coarsely equivalent. Are $X$ and $Y$ simultaneously coarsely and homeomorphically equivalent?
\end{problem}

It is worth noticing that, for separable spaces, the existence of a  coarse equivalence easily implies the existence of a measurable coarse equivalence.

\begin{prop}\label{measurable}
Let $X$ and $Y$ be separable Banach spaces, and assume that $X$ is coarsely equivalent to $Y$. Then, there exists a coarse equivalence $X\to Y$ which is also a Borel bijection.   
\end{prop}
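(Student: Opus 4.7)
The plan is to promote the bi-Lipschitz identification of nets given by Proposition~\ref{KALON2} to a global Borel bijection by piecing together Borel bijections on matched bounded pieces of $X$ and $Y$, using Kuratowski's isomorphism theorem for uncountable standard Borel spaces. The finite-dimensional case is handled separately: coarse equivalence forces $\dim X = \dim Y$ (by growth of $\varepsilon$-net cardinalities of balls, as referenced earlier in the paper via \cite{NY}), so $X$ and $Y$ are linearly isomorphic and any linear isomorphism is simultaneously a Borel bijection and a coarse equivalence. I may therefore assume $X$ and $Y$ are both infinite-dimensional.

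By Zorn's lemma, fix maximal subsets $N_X=\{x_n\}_{n\in\N}\subset X$ and $N_Y=\{y_n\}_{n\in\N}\subset Y$ with pairwise distances strictly greater than $1$; by maximality, for every $x\in X$ there exists $n$ with $\|x-x_n\|\le 1$, and similarly in $Y$. These are countable nets of $X$ and $Y$ by separability, so Proposition~\ref{KALON2} supplies a bi-Lipschitz bijection $\phi:N_X\to N_Y$; after relabeling I assume $\phi(x_n)=y_n$, and I let $L\ge 1$ be a common bi-Lipschitz constant for $\phi$ and $\phi^{-1}$.

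Next I would build a Borel partition of $X$ by greedy assignment: for $x\in X$ set
$$n(x)=\min\big\{n\in\N\ :\ \|x-x_n\|\le 1\big\}$$
and let $A_n=\{x\in X:n(x)=n\}$. Then $\{A_n\}_n$ is a Borel partition of $X$ with $\mathrm{diam}(A_n)\le 2$, and since $N_X$ is strictly $>1$-separated, for each $n$ the finite minimum $\delta_n=\min_{k<n}(\|x_n-x_k\|-1)>0$ gives $B(x_n,\delta_n)\subset A_n$; in particular, each $A_n$ is an uncountable Borel subset of the Polish space $X$. I would define $\{B_n\}_n\subset Y$ analogously via $N_Y$. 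Kuratowski's isomorphism theorem now yields, for each $n$, a Borel bijection $\psi_n:A_n\to B_n$, and gluing produces a Borel bijection $\tilde f:X\to Y$ with $\tilde f|_{A_n}=\psi_n$. Surjectivity gives coboundedness, and for $x\in A_n$, $x'\in A_m$ the bounds $\|x-x_n\|,\|x'-x_m\|\le 1$ and $\|\tilde f(x)-y_n\|,\|\tilde f(x')-y_m\|\le 1$, together with the $L$-bi-Lipschitzness of $\phi$ on $N_X$, yield
$$L^{-1}\|x-x'\|-C\ \le\ \|\tilde f(x)-\tilde f(x')\|\ \le\ L\|x-x'\|+C$$
for some constant $C=C(L)$, so $\tilde f$ is a coarse Lipschitz equivalence.

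The only care required is to guarantee that every partition piece is uncountable so that Kuratowski's theorem applies uniformly; using a strictly $>1$-separated maximal net (rather than a merely $\ge 1$-separated one) makes this automatic, because then each $A_n$ contains an open ball around its defining net point $x_n$. The rest of the argument is routine bookkeeping, and I do not anticipate any substantive obstacle.
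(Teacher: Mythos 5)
Your proposal is correct and follows essentially the same route as the paper: reduce to the infinite-dimensional case, use Proposition~\ref{KALON2} to get a Lipschitz isomorphism between countable nets, greedily partition each space into uniformly bounded Borel pieces each containing a small ball around its net point, apply the Borel isomorphism theorem piecewise, and glue. The only cosmetic difference is the exact form of the greedy partition (min-index assignment versus the paper's ball-subtraction scheme), which changes nothing of substance.
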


\begin{proof}
Without loss of generality, we can assume that $X$ and $Y$ are infinite dimensional (see Proposition 2.2.4, and Theorem 2.2.5 of \cite{NY}). Let $\{x_n\}_n$ and $\{y_n\}_n$ be  $(1,1)$-nets in $X$ and $Y$ such that $x_n\mapsto y_n$ defines a Lipschitz isomorphism. Let $A_1=B(x_1,1)\setminus \cup_{i>1}B(x_i,1/2)$, and 

$$A_n=B(x_n,1)\setminus \big(\bigcup_{i<n}A_i\cup\bigcup_{i>n}B(x_i,1/2)\big),$$\hfill

\noindent for all $n>1$. We define a sequence of subsets $(C_n)_n$ of $Y$ analogously. It is clear that $X=\sqcup_n A_n$, $Y=\sqcup_n C_n$, that $A_n$ and $C_n$ are Borel, and that $A_n$ and $C_n$ are Borel isomorphic (see \cite{Ke}, Theorem 15.6), for all $n\in\N$. Let $f_n:A_n\to C_n$  be  Borel isomorphisms. Define a map $\varphi: X\to Y$ by setting $\varphi(x)=    f_n(x)$, if $x\in A_n$. It should be clear that $\varphi$ is both a coarse equivalence and a Borel bijection.
\end{proof}

\section{Unconditional sums of coarse and uniform equivalences.}\label{sectionsums}

 In \cite{Ka2}, Kalton proved  (Theorem $4.6$(ii)) that if $X$ and $Y$ are coarsely equivalent (resp. uniformly homeomorphic), then $\ell_p(X)$ and $\ell_p(Y)$ are coarsely equivalent (resp. uniformly homeomorphic). However, as Kalton pointed out, his proof seems to be much more complicated than necessary, and it relies on results about  close (resp. uniformly close) Banach spaces. In this section, we give a direct proof for  a general theorem (see Theorem \ref{geral} below) which gives us Kalton's result as a corollary. \\

\noindent \emph{Proof of Theorem \ref{trikal}.}  Let $\varphi: X\to Y$ be a coarse equivalence (resp. uniform homeomorphism). Assume $\varphi(0)=0$. For each $n\in\N$, let $\varphi_n(\cdot)=2^{-n}\varphi(2^n\cdot)$. Define $\Phi:(\oplus X)_\mathcal{E}\to (\oplus Y)_\mathcal{E}$ by letting $\Phi(x)=(\varphi_n(x_n))_n$, for all $x=(x_n)_n\in (\oplus X)_\mathcal{E}$. \\

\textbf{Claim:} $\Phi$ is  well defined and coarse (resp. uniformly continuous).\\
 
Let  $L>0$, be such that $\omega_\varphi(t)<Lt+L$, for all $t>0$. So, $\omega_{\varphi_n}(t)<Lt+L2^{-n}$, for all $t>0$. Let us first notice that $\Phi$ is well defined. Let $x=(x_n)_n\in (\oplus X)_\mathcal{E}$. For $\eps>0$, pick $N\in\N$ so that $\|\sum_{n>N} \|x_n\| e_n\|<\eps /2L$, and $\sum_{n>N}2^{-n}<\eps /2L$. Then, for $k>l>N$, we have

\begin{align*}\|\sum_{n=l}^k\|\varphi_n(x_n)\|e_n\|&\leq \|\sum_{n=l}^k\Big(L\|x_n\|+\frac{L}{2^n}\Big)e_n\|\\
&\leq \|\sum_{n=l}^kL\|x_n\|e_n\|+\|\sum_{n=l}^k\frac{L}{2^n}e_n\| <\eps.
\end{align*}\hfill

\noindent Hence, the sum $\sum_{n\in\N}\varphi_n(x)$ converges for every $x$, so $\Phi$ is well defined. 

Say $x,y\in (\oplus X)_\mathcal{E}$. Then

\begin{align*}\|\Phi(x)-\Phi(y)\|&=\|\sum_{n\in\N}\|\varphi_n(x_n)-\varphi_n(y_n)\|e_n\|\\
&\leq \|\sum_{n\in\N}L\|x_n-y_n\|e_n\|+\|\sum_{n\in\N}\frac{L}{2^n}e_n\|\leq L\|x-y\|+L.
\end{align*}\hfill

\noindent So $\Phi$ is coarse.

Assume $\varphi$ is uniformly continuous, let us show that $\Phi$ is also uniformly continuous. Fix $\eps>0$. Pick $N\in\N$ such that $\sum_{n>N}2^{-n}<\eps /3L$. Choose $\delta>0$ such that $\delta< \eps /3L$, and $\|\varphi_n(x_n)-\varphi_n(y_n)\|<\eps /3N$, for all $n\leq N$, and all $x_n,y_n\in X$ such that $\|x_n-y_n\|<\delta$.  Then, if $\|x-y\|<\delta$, we have

\begin{align*}\|\Phi(x)-&\Phi(y)\|\\
 &\leq\|\sum_{n\leq N}\|\varphi_n(x_n)-\varphi_n(y_n)\|e_n\|+ \|\sum_{n>N}L\|x_n-y_n\|e_n\|+\|\sum_{n>N}\frac{L}{2^n}e_n\|\\ &\leq \eps/3+\eps/3+\eps/3=\eps
\end{align*}\hfill

\noindent This shows that $\Phi$ is uniformly continuous.

Say  $\varphi$ is a uniform homeomorphism. Notice that $\varphi_n^{-1}(\cdot)=2^{-n}\varphi^{-1}(2^n\cdot)$, therefore, $\Phi^{-1}(\cdot)=(\varphi^{-1}_n(\cdot))_n$, and, by the same arguments as above, $\Phi^{-1}$ is uniformly continuous. Hence,  $(\oplus X)_\mathcal{E}$ and $(\oplus Y)_\mathcal{E}$ are uniformly homeomorphic.

If $\varphi$ is a coarse equivalence, let $\psi:Y\to X$ be a coarse inverse for $\varphi$ (see Subsection \ref{coarsedef}, Remark \ref{remarkcoarseinv}).  Let $\psi_n(\cdot)=2^{-n}\psi(2^n\cdot)$, and $\Psi=(\psi_n)_n$. Then, by the same arguments above, $\Psi$ is coarse. One can easily see that $\Phi$ and $\Psi$ are coarse inverses of each other, so we are done.

The case of simultaneous homeomorphic and coarse equivalences follows analogously.
\qed\\


The proof above actually gives us the following slightly stronger result.

\begin{thm}\label{geral}
Let $(X_n)_n$ and $(Y_n)_n$ be sequences of Banach spaces, and let $\varphi_n:X_n\to Y_n$ be a coarse equivalence (resp. uniform homeomorphism, or simultaneously homeomorphic and coarse equivalence), for each $n\in\N$. Let $\mathcal{E}$ be a normalized $1$-unconditional  basic sequence. Assume that  

$$\sup_n\lim_{t\to\infty}\frac{\omega_{\varphi_n}(t)}{t}<\infty\ \  \text{ and }\ \ \inf_n\lim_{t\to\infty}\frac{\rho_{\varphi_n}(t)}{t}>0.$$\hfill

\noindent Then $(\oplus_n X_n)_\mathcal{E}$ and $(\oplus_n Y_n)_\mathcal{E}$ are  coarsely equivalent (resp. uniformly homeomorphic, or simultaneously homeomorphically and coarsely equivalent).
\end{thm}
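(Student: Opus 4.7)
The plan is to imitate the proof of Theorem \ref{trikal} coordinate-wise, replacing the single rescaled sequence $\varphi_n = 2^{-n}\varphi(2^n\cdot)$ by the given sequence of $\varphi_n:X_n\to Y_n$. After a translation arranging $\varphi_n(0) = 0$ for every $n$, set
\[
\Phi: (\oplus_n X_n)_\mathcal{E} \to (\oplus_n Y_n)_\mathcal{E}, \qquad \Phi\bigl((x_n)_n\bigr) = (\varphi_n(x_n))_n,
\]
with a symmetric definition of $\Psi$ using coarse (resp. uniform) inverses of the $\varphi_n$. The proof of Theorem \ref{trikal} uses only two features of its specific sequence: a coarse-Lipschitz bound $\omega_{\varphi_n}(t)\le Lt + \eps_n$ with $(\eps_n)$ summable in $\mathcal{E}$, and (in the uniform-homeomorphism case) a uniform-continuity estimate that can be split at any finite index $N$. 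So the task reduces to extracting these from the new hypotheses.

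First I would use $L := \sup_n \lim_{t\to\infty}\omega_{\varphi_n}(t)/t < \infty$ together with Proposition \ref{KALON1} to conclude that each $\varphi_n$ is coarse Lipschitz of asymptotic slope at most $L$, and dually $\ell := \inf_n \lim_{t\to\infty}\rho_{\varphi_n}(t)/t > 0$ produces a lower estimate $\rho_{\varphi_n}(t) \ge \ell t - \eps_n'$ on large scales. The additive errors $\eps_n, \eps_n'$ produced by the asymptotic-slope conditions need not be summable in $\mathcal{E}$ a priori, but replacing each $\varphi_n$ by a rescaled coarse equivalence $\tilde\varphi_n(\cdot) = s_n\varphi_n(s_n^{-1}\cdot)$ with $s_n \downarrow 0$ rapidly preserves both asymptotic constants $L, \ell$ and the status of each $\varphi_n$ as a coarse equivalence (resp. uniform homeomorphism, or simultaneous version) between $X_n$ and $Y_n$, while driving the additive corrections to zero fast enough that $(\eps_n)$ and $(\eps_n')$ become $\mathcal{E}$-summable.

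With these uniform bounds in place, the $1$-unconditionality of $\mathcal{E}$ reduces everything to coordinate-wise estimates just as in the proof of Theorem \ref{trikal}:
\[
\Bigl\|\sum_{n=l}^k \|\tilde\varphi_n(x_n) - \tilde\varphi_n(y_n)\|\, e_n\Bigr\| \le L\|x-y\| + \Bigl\|\sum_{n=l}^k \eps_n e_n\Bigr\|,
\]
yielding well-definedness, coarseness, and a matching lower bound that furnishes expansion. Uniform continuity (in the uniform-homeomorphism case) and continuity (in the simultaneous case) are handled by splitting the sum at a large $N$, using the uniform modulus on $n \le N$ and the summable linear tail on $n > N$, mirroring the argument already carried out for Theorem \ref{trikal}. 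Finally, $\Phi$ and $\Psi$ turn out to be coarse (resp. uniform) inverses of one another coordinate-wise. The main technical obstacle is the rescaling bookkeeping in the second step: choosing $s_n$ so that both $\omega$-bounds and $\rho$-bounds produce $\mathcal{E}$-summable additive errors simultaneously, and verifying that the coarse/uniform inverses survive this rescaling; once that is arranged, the whole argument is essentially the one already given for Theorem \ref{trikal}.
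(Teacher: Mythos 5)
Your proposal is correct and follows essentially the same route as the paper: extract thresholds $t_n$ from the two asymptotic slope conditions to get $\omega_{\varphi_n}(t)\le Lt+Lt_n$ and $\rho_{\varphi_n}(t)\ge t/L - t_n/L$, rescale by $\tilde\varphi_n(x)=\frac{1}{2^nt_n}\varphi_n(2^nt_nx)$ (your $s_n=1/(2^nt_n)$) to make the additive errors summable against the normalized basis $\mathcal{E}$, and then run the coordinate-wise argument of Theorem \ref{trikal} for $\Phi$ and for the (coarse or uniform) inverses. The only cosmetic quibble is that the coarse-Lipschitz bound with controlled slope comes directly from the definition of the asymptotic limit (monotonicity of $\omega_{\varphi_n}$) rather than from Proposition \ref{KALON1}, which does not by itself bound the slope.
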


\begin{proof}
Let us work with the uniform homeomorphism case. Without loss of generality, we assume that $\varphi_n(0)=0$, for all $n\in\N$. Let $L>0$ be large enough so that $\lim_{t\to\infty}\omega_{\varphi_n}(t)/t<L$, and $\lim_{t\to\infty}\rho_{\varphi_n}(t)/t>1/L$, for all $n\in\N$. For each $n\in\N$, pick $t_n>0$ such that  $\omega_{\varphi_n}(t)<Lt$, and $\rho_{\varphi_n}(t)>t/L$, for all $n\in\N$, and all $t\geq t_n$. Then, 

$$\omega_{\varphi_n}(t)<Lt+Lt_n\ \ \text{ and }\ \ \rho_{\varphi_n}(t)>\frac{1}{L}t-\frac{1}{L}t_n,$$\hfill

\noindent for all $n\in\N$, and all $t>0$. Hence, it is easy to check that $\omega_{\varphi^{-1}_n}(t)<Lt+t_n$, for all $n$, and all $t>0$. Setting 

$$\tilde{\varphi}_n(x)=\frac{1}{2^nt_n}\varphi_n(2^nt_nx),$$\hfill

\noindent we have that each $\tilde{\varphi}_n$ is a uniform homeomorphism between $X_n$ and $Y_n$, and that  

$$\omega_{\tilde{\varphi}_n}(t)<Lt+\frac{L}{2^n}\ \ \text{ and } \ \ \ \omega_{\tilde{\varphi}^{-1}_n}(t)<Lt+\frac{1}{2^n},$$\hfill

\noindent for all $n\in\N$, and all $t>0$. The proof now follows analogously the proof of Theorem  \ref{trikal}. For the coarse equivalence case we only need to work with the coarse inverses of $\varphi_n$'s instead of its inverses, and proceed similarly.
 \end{proof}

\begin{cor}
Let $X$ and $Y$ be coarsely equivalent (resp. uniformly homeomorphic, or simultaneously homeomorphically and coarsely equivalent) Banach spaces, then $\ell_p(X)$ and $\ell_p(Y)$ are coarsely equivalent (resp. uniformly homeomorphic, or simultaneously homeomorphically and coarsely equivalent).
\end{cor}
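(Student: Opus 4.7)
The plan is to derive the corollary as a direct specialization of Theorem \ref{geral} by taking constant sequences $X_n = X$ and $Y_n = Y$ and $\varphi_n = \varphi$ for every $n$, where $\varphi \colon X \to Y$ is the given coarse equivalence (respectively uniform homeomorphism, or simultaneous coarse and homeomorphic equivalence). Since $\mathcal{E}$ is the standard basis of $\ell_p$, we have $(\oplus_n X)_{\mathcal{E}} = \ell_p(X)$ and $(\oplus_n Y)_{\mathcal{E}} = \ell_p(Y)$, so it suffices to verify the two growth hypotheses
$$\sup_n \lim_{t\to\infty}\frac{\omega_{\varphi_n}(t)}{t} < \infty \quad \text{and} \quad \inf_n \lim_{t\to\infty}\frac{\rho_{\varphi_n}(t)}{t} > 0.$$
With $\varphi_n = \varphi$ constant, both suprema/infima reduce to the corresponding limits for the single map $\varphi$.

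For the upper bound, I would invoke Proposition \ref{KALON1}: since $X$ is a Banach space and $\varphi$ is coarse (being a coarse equivalence, or uniformly continuous hence coarse), it is in fact coarse Lipschitz, so there is $L \geq 1$ with $\omega_\varphi(t) \leq Lt + L$, which gives $\lim_{t\to\infty}\omega_\varphi(t)/t \leq L < \infty$. For the lower bound on $\rho_\varphi$, I would use a coarse inverse $\psi \colon Y \to X$ of $\varphi$ (Remark \ref{remarkcoarseinv}; in the uniform homeomorphism case, $\psi = \varphi^{-1}$, which is also uniformly continuous and hence coarse Lipschitz on a Banach space). Applying Proposition \ref{KALON1} again to $\psi$ yields a constant $L' \geq 1$ with $\omega_\psi(t) \leq L't + L'$, while $\sup_{x \in X}\|x - \psi(\varphi(x))\| \leq C$ for some $C < \infty$. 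Combining these via the triangle inequality gives
$$\|x - y\| \leq \|\psi(\varphi(x)) - \psi(\varphi(y))\| + 2C \leq L'\|\varphi(x) - \varphi(y)\| + L' + 2C,$$
which rearranges to $\rho_\varphi(t) \geq t/L' - (L' + 2C)/L'$, so $\lim_{t\to\infty}\rho_\varphi(t)/t \geq 1/L' > 0$.

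With the two hypotheses verified, Theorem \ref{geral} immediately delivers the conclusion in all three cases (coarse equivalence, uniform homeomorphism, and simultaneous homeomorphic and coarse equivalence), since the theorem is stated with exactly this trichotomy. There is essentially no main obstacle here, as the work of controlling the asymptotic slopes of $\omega_\varphi$ and $\rho_\varphi$ has already been packaged into Proposition \ref{KALON1} and the coarse inverse construction of Remark \ref{remarkcoarseinv}; the only small point worth noting is that the uniform homeomorphism case requires the coarse Lipschitz bound on $\varphi^{-1}$, which again follows from Proposition \ref{KALON1} because $\varphi^{-1}$ is a uniformly continuous map between Banach spaces and therefore coarse.
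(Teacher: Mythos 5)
Your proposal is correct and is essentially the intended derivation: the paper states this corollary without proof as an immediate specialization of Theorem \ref{geral} (or, even more directly, of Theorem \ref{trikal}, whose hypothesis that $\mathcal{E}$ be a normalized $1$-unconditional basic sequence is satisfied by the unit vector basis of $\ell_p$). Your verification of the two asymptotic slope conditions via Proposition \ref{KALON1} and the coarse inverse of Remark \ref{remarkcoarseinv} is exactly the normalization step used inside the proof of Theorem \ref{geral} itself, so nothing is missing; the only remark worth making is that quoting Theorem \ref{trikal} would have let you skip that verification entirely.
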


\begin{remark}
The conditions on $\omega_{\varphi_n}$ and $\rho_{\rho_n}$ in Theorem \ref{geral} cannot be omitted.  Indeed, let  $q_X=\inf\{q\in [2,\infty)\mid X\text{ has cotype }q\}$, for any Banach space $X$. Then, by Theorem 1.11 (resp. Theorem 1.9) of \cite{MN},  if a Banach space $X$ coarsely (resp. uniformly) embeds into a Banach space $Y$ with nontrivial type, then $q_X\leq q_Y$. Therefore, $(\oplus_n \ell^n_{\infty})_2$ does not coarsely (resp. uniformly) embed into $(\oplus_n \ell^n_{2})_2\cong \ell_2$, as $q_{(\oplus_n \ell^n_{\infty})_2}=\infty$ and $q_{\ell_2}=2$. 
\end{remark}

Clearly, the method above gives us that, if $X$ coarse Lipschitz embeds into $Y$, then $(\oplus X)_\mathcal{E}$ coarse Lipschitz embeds into $(\oplus Y)_\mathcal{E}$. But we do not know the answer for this question if we do not have a linear lower bound for the compression modulus of the embedding. We end this section with a natural question.

\begin{problem}
Let $X$ and $Y$ be Banach spaces, and let $\mathcal{E}$ be a $1$-unconditional basic sequence. Assume that  $X$ coarsely (resp. uniformly) embeds into $Y$. Does it follow that $(\oplus X)_\mathcal{E}$ coarsely (resp. uniformly) embeds into $(\oplus Y)_\mathcal{E}$?
\end{problem}

Notice that the answer to the problem above is trivially yes if $\mathcal{E}$ is the standard basis of $c_0$. Indeed, if $\varphi:X\to Y$ is a uniform embedding and $\varphi(0)=0$, then  $\Phi=(\varphi)_n$ is a uniform embedding of $(\oplus X)_{c_0}$ into $(\oplus Y)_{c_0}$. If $\varphi$ is a coarse embedding, then $\Phi=(\varphi)_n$ does not need to be well defined, so the same argument does not work. However, without loss of generality, we can assume that $\varphi(x)=0$, for  all $x\in B_X$. Then, the map $\Phi=(\varphi)_n$ is well defined, and it is a coarse embedding.

\begin{remark} We should notice that, in \cite{Ka2}, Kalton only deals with what he calls ``coarse homeomorphisms$"$, i.e., a coarse equivalence which is also a bijection. However it is easy to show that $X$ and $Y$ are coarsely homeomorphic  if and only if  $X$ and $Y$ are coarsely equivalent, for all Banach spaces $X$ and $Y$.  This follows from the easy fact that if $X$ and $Y$ are coarsely equivalent, then $X$ and $Y$ have the same density character, which equals the cardinality of any net in $X$ and $Y$ (for separable Banach spaces this follows from Proposition \ref{measurable}).
\end{remark}

\noindent \textbf{Acknowledgments:} The author would like to thank his adviser C. Rosendal for all the help and attention he gave to this paper. The author would also like to thank the  anonymous referee for all their comments and corrections.

\end{document}